\newtheorem{theorem}{Theorem}[section]
\newtheorem{lemma}[theorem]{Lemma}
\newtheorem{proposition}[theorem]{Proposition}
\newtheorem{corollary}[theorem]{Corollary}
\theoremstyle{definition}
\newtheorem{example}[theorem]{Example}
\theoremstyle{remark}
\newtheorem{remark}[theorem]{Remark}
\numberwithin{equation}{section}
\begin{document}

\title{Angle Preserving Mappings}

\author[M.S. Moslehian, A. Zamani, M. Frank]{Mohammad Sal Moslehian, Ali Zamani and Michael Frank}

\address{$^1$ Department of Pure Mathematics, Center of Excellence in
Analysis on Algebraic Structures (CEAAS), Ferdowsi University of
Mashhad, P.O. Box 1159, Mashhad 91775, Iran}
\email{moslehian@um.ac.ir, moslehian@member.ams.org}
\email{Zamani.ali85@yahoo.com}
\address{$^2$ Hochschule f\"ur Technik, Wirtschaft und Kultur (HTWK) Leipzig, Fakult\"at Informatik, Mathematik und Naturwissenschaften, PF 301166, 04251 Leipzig, Germany}
\email{mfrank@imn.htwk-leipzig.de}
\subjclass[2010]{Primary 46L08; Secondary 46C05, 46B20.}
\keywords{Orthogonality preserving mapping, angle, inner product space, inner product $C^*$-module}

\begin{abstract}
In this paper, we give some characterizations of orthogonality preserving mappings between inner product spaces. Furthermore, we study the linear mappings that preserve some angles. One of our main results states that if $\mathcal{X}, \mathcal{Y}$ are real inner product spaces and $\theta\in(0, \pi)$, then an injective nonzero linear mapping $T:\mathcal{X}\longrightarrow \mathcal{Y}$ is a similarity whenever (i) $x\underset{\theta}{\angle} y\, \Leftrightarrow \,Tx\underset{\theta}{\angle} Ty$ for all $x, y\in \mathcal{X}$; (ii) for all $x, y\in \mathcal{X}$, $\|x\|=\|y\|$ and $x\underset{\theta}{\angle} y$ ensure that $\|Tx\|=\|Ty\|$. We also investigate orthogonality preserving mappings in the setting of inner product $C^{*}$-modules. Another result shows that if $\mathbb{K}(\mathscr{H})\subseteq\mathscr{A}\subseteq\mathbb{B}(\mathscr{H})$ is a $C^{*}$-algebra and $T\,:\mathscr{E}\longrightarrow \mathscr{F}$ is an $\mathscr{A}$-linear mapping between inner product
$\mathscr{A}$-modules, then $T$ is orthogonality preserving if and only if $|x|\leq|y|\, \Rightarrow \,|Tx|\leq|Ty|$ for all $x, y\in \mathscr{E}$.
\end{abstract} \maketitle

\section{Introduction}

Let $(\mathcal{X}, \langle \cdot, \cdot\rangle)$ be a real inner product space and $x, y\in \mathcal{X}\smallsetminus\{0\}$. From the Cauchy--Schwarz inequality, we have $-1\leq\frac{\langle x, y\rangle}{\|x\|\|y\|}\leq1$. Therefore, there is a unique number $\widehat{(x, y)}\in[0, \pi]$ such that $\widehat{(x, y)}=\arccos\frac{\langle x, y\rangle}{\|x\|\|y\|}$. The number $\widehat{(x, y)}$ is called the angle between $x$ and $y$. If $\theta=\widehat{(x, y)}$, we write $x\underset{\theta}{\angle} y$. If $\widehat{(x, y)}=0$ or $\pi$, we say that $x, y$ are parallel and we write $x\parallel y$ (see \cite{M.Z}). Also, if $\widehat{(x, y)}=\frac{\pi}{2}$, we say that $x, y$ are orthogonal and we write $x\perp y$.\\
We call a mapping $T:\mathcal{X}\longrightarrow \mathcal{Y}$, where $\mathcal{X}$ and $\mathcal{Y}$ are inner product spaces, orthogonality preserving if $x\perp y\, \Rightarrow \,Tx\perp Ty$ for all $x, y\in \mathcal{X}$. Chmieli\'{n}ski \cite{Ch.1} proved that a linear mapping $T$ preserves orthogonality if and only if $T$ is an isometry multiplied by a positive constant. Blanco and Turn\v{s}ek \cite{B.T}, extended this result to the case of the linear mappings between normed spaces with the Birkhoff--James orthogonality. Recently the linear mappings preserving approximately Roberts orthogonality in normed spaces have been studied; cf. \cite{Z.M}. Further, Chmieli\'{n}ski \cite{Ch.2} studied stability of angle-preserving mappings on the plane.
In Section $3$ we give a new characterization of orthogonality preserving mappings between two real inner product spaces and investigate the linear mappings that preserve some angles.\\

It is natural to explore the orthogonality preserving mappings between inner product $C^{*}$-modules. Ili\v{s}evi\'{c} and Turn\v{s}ek \cite{I.T} studied approximately orthogonality preserving mappings on $C^{*}$-modules. Frank et al. \cite{F.M.P} extended some of the results of \cite{I.T}. Orthogonality preserving mappings have been treated also in \cite{G.I.M.S, Ch.3}. Burgos \cite{Bu} studied orthogonality preserving linear mappings on Hilbert $C^{*}$-algebras with non-zero socles. We have also to mention the survey of Leung et al. \cite{L.N.W.1}.
In Section $4$ we study orthogonality preserving mappings in inner product $C^{*}$-modules, in particular, we treat the local mappings.

\section{Preliminaries}

Let us fix our notation and terminology. A $C^*$-algebra is a complex Banach $\ast$-algebra $(\mathscr{A}, \|.\|)$ with the additional norm condition $\|a^\ast a\|=\|a\|^2$. The basic examples of $C^*$-algebras are
$\mathbb{K}(\mathscr{H})$ and $\mathbb{B}(\mathscr{H})$, the algebra of all compact operators and the algebra of all bounded operators on a complex Hilbert space $\mathscr{H}$, respectively. An element $a\in\mathscr{A}$ is called positive, denoted by $a\geq0$, if $a=b^* b$ for some $b\in\mathscr A$. If $0\leq a\leq b$, then $0\leq c^\ast ac\leq c^\ast bc\leq\|b\|c^\ast c$ for all $c\in\mathscr{A}$. In addition, if $a, b\geq0$ and $\|ac\|=\|bc\|$ for all $c\in\mathscr{A}$, then $a=b$. If $a\in\mathscr{A}$ is positive, then there exists a unique positive $b\in\mathscr{A}$ such that $a=b^2$. Such an element $b$ is called the positive square root of $a$ and denoted by $a^{\frac{1}{2}}$. Also, $0\leq a\leq b$ implies $0\leq a^{\frac{1}{2}} \leq b^{\frac{1}{2}}$. The converse does not hold in general, but it is valid in the commutative $C^*$-algebras. An approximate unit for a $C^*$-algebra $\mathscr{A}$ is an increasing net $(e_i)_{i\in I}$ of positive elements in the closed unit ball of $\mathscr{A}$ such that $\lim\limits_{i}\|a - ae_i\| = 0$ for all $a\in\mathscr{A}$. More details on the theory $C^*$-algebras can be found in \cite{mor}.\\

The notion of an inner product $C^{*}$-module is a natural generalization of that of an inner product space arising under the replacement of the field of scalars $\mathbb{C}$ by a $C^{*}$-algebra. Let $\mathscr{A}$ be a $C^*$-algebra. An inner product $\mathscr{A}$-module, or an inner product $C^*$-module over $\mathscr{A}$ is a complex linear space $\mathscr{E}$ which is a right $\mathscr{A}$-module with a compatible scalar multiplication (i.e., $\mu(xa) = (\mu x)a = x(\mu a)$ for all $x\in \mathscr{E}, a\in\mathscr{A}, \mu\in\mathbb{C}$) and equipped with an $\mathscr{A}$-valued inner product $\langle\cdot,\cdot\rangle : \mathscr{E}\times \mathscr{E}\longrightarrow\mathscr{A}$ satisfying\\
(i) $\langle x, \alpha y+\beta z\rangle=\alpha\langle x, y\rangle+\beta\langle x, z\rangle$,\\
(ii) $\langle x, ya\rangle=\langle x, y\rangle a$,\\
(iii) $\langle x, y\rangle^*=\langle y, x\rangle$,\\
(iv) $\langle x, x\rangle\geq0$ and $\langle x, x\rangle=0$ if and only if $x=0$,\\
for all $x, y, z\in \mathscr{E}, a\in\mathscr{A}, \alpha, \beta\in\mathbb{C}$.\\
A mapping $T:\mathscr{E}\longrightarrow \mathscr{F}$, where $\mathscr{E}$ and $\mathscr{F}$ are inner product $\mathscr{A}$-modules, is called $\mathscr{A}$-linear if it is linear and $T(xa)=(Tx)a$ for all $x\in \mathscr{E}$, $a\in\mathscr{A}$. For an inner product $\mathscr{A}$-module $\mathscr{E}$ the Cauchy--Schwarz inequality holds (see also \cite{ABFM}):
$$\|\langle x, y\rangle\|^2\leq\|\langle x, x\rangle\|\,\|\langle y, y\rangle\|\qquad(x, y\in \mathscr{E}).$$
Consequently, $\|x\|=\|\langle x, x\rangle\|^{\frac{1}{2}}$ defines a norm on $\mathscr{E}$. If $\mathscr{E}$ is complete with respect to this norm, then it is called a Hilbert $\mathscr{A}$-module, or a Hilbert $C^*$-module over $\mathscr{A}$. Complex Hilbert spaces are Hilbert $\mathbb{C}$-modules. Any $C^*$-algebra $\mathscr{A}$ can be regarded as a Hilbert $C^*$-module over itself via $\langle a, b\rangle :=a^* b$. For every $x\in \mathscr{E}$ the positive square root of $\langle x, x\rangle$ is denoted by $|x|$. In the case of a $C^*$-algebra we get the usual modulus of $a$, that is $|a|=(a^* a)^{\frac{1}{2}}$. Although the definition of $|x|$ has the same form as that of the norm of elements of inner product spaces, there are some significant differences. For instance, it does not satisfy the triangle inequality in general. Note that the theory of inner product $C^{*}$-modules is quite different from that of inner product spaces. For example, not any closed submodule of an inner product $C^{*}$-module is complemented; a bounded $C^*$-linear operator on an inner product $C^{*}$-module may not have an adjoint operator. We refer the reader to \cite{lan, Man} for more information on the basic theory of Hilbert $C^{*}$-modules.


\section{Linear mappings preserving some angles in inner product spaces}

We start our work with the following proposition. This result is of independent interest.

\begin{proposition}\label{lm.28}
Let $\theta\in(0, \pi)$, $\mathcal{X}$ be a real inner product space and $x, y$ be two independent vectors in $\mathcal{X}$ such that $\theta\neq\widehat{(x, y)}$. Then the following conditions are equivalent:
\begin{itemize}
\item[(i)] $\|x\|=\|y\|$;
\item[(ii)] there exists a nonzero scalar $\lambda$ such that
$$x+\lambda y\underset{\theta}{\angle} y \quad \mbox{and} \quad y+\lambda x\underset{\theta}{\angle} x.$$
\end{itemize}
Moreover, $\lambda$ is uniquely determined by $$\lambda=-\frac{\langle x, y\rangle-\cot\theta\sqrt{\|x\|^4-{\langle x, y\rangle}^2}}{\|x\|^2}.$$
\end{proposition}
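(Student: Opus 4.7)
The plan is to set $a = \|x\|^{2}$, $b = \|y\|^{2}$, $c = \langle x, y\rangle$ and convert the two angle conditions in (ii) into the system
\begin{align*}
c + \lambda b &= \cos\theta\cdot\|x + \lambda y\|\sqrt{b},\\
c + \lambda a &= \cos\theta\cdot\|y + \lambda x\|\sqrt{a},
\end{align*}
using the defining identity $\langle u, v\rangle = \cos\theta\,\|u\|\,\|v\|$ for $u \underset{\theta}{\angle} v$. The linear independence of $x, y$ gives $c^{2} < ab$ by Cauchy--Schwarz, which keeps all square roots that arise strictly positive.

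For (i)$\Rightarrow$(ii) I would set $a = b = r^{2}$. A direct expansion yields $\|x + \lambda y\|^{2} = \|y + \lambda x\|^{2} = (1 + \lambda^{2})r^{2} + 2\lambda c$, so the two equations collapse into a single one; squaring it produces the quadratic
$$\sin^{2}\theta\cdot r^{4}\lambda^{2} + 2cr^{2}\sin^{2}\theta\cdot\lambda + c^{2} - \cos^{2}\theta\cdot r^{4} = 0,$$
whose discriminant simplifies to $4r^{4}\sin^{2}\theta\cos^{2}\theta(r^{4} - c^{2})\ge 0$. The two roots are $\bigl(-c \pm |\cot\theta|\sqrt{r^{4} - c^{2}}\bigr)/r^{2}$, and I would then use the fact that $c + \lambda r^{2}$ must carry the sign of $\cos\theta$ (from the unsquared equation) to single out $\lambda = -(c - \cot\theta\sqrt{r^{4} - c^{2}})/r^{2}$, the stated formula. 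The degenerate case $\cos\theta = 0$ is absorbed, since then the discriminant vanishes and the unique root is $-c/r^{2}$. Non-vanishing of $\lambda$ follows because $\lambda = 0$ would force $c = \cot\theta\sqrt{r^{4} - c^{2}}$, and squaring plus matching signs yields $c/r^{2} = \cos\theta$, i.e.\ $\widehat{(x, y)} = \theta$, contradicting the hypothesis.

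For (ii)$\Rightarrow$(i) I would square both equations in the system and subtract, which after factoring gives
$$\sin^{2}\theta\cdot(b - a)\,\lambda\,\bigl(2c + \lambda(a + b)\bigr) = 0.$$
Since $\sin\theta > 0$ and $\lambda \neq 0$, either $a = b$ (which is (i)) or $\lambda = -2c/(a + b)$. In the second subcase, direct substitution gives $\langle x + \lambda y, y\rangle = c(a - b)/(a + b)$ and $\langle y + \lambda x, x\rangle = c(b - a)/(a + b)$; the unsquared equations require both of these inner products to carry the sign of $\cos\theta$, but they are negatives of each other, so $c(a - b) = 0$. If $a \neq b$ then $c = 0$, forcing $\lambda = -2c/(a + b) = 0$, contradicting the standing hypothesis; hence $a = b$. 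Uniqueness of $\lambda$ in (ii) is inherited from the sign-selection carried out in the (i)$\Rightarrow$(ii) step. The main obstacle I anticipate is bookkeeping of signs when passing between the squared and unsquared versions of the equations, especially in the boundary case $\cos\theta = 0$ where the discriminant degenerates.
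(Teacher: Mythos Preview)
Your proposal is correct and follows essentially the same strategy as the paper: convert the two angle conditions into algebraic equations, manipulate, and case-split on the resulting factorization. The differences are organizational rather than conceptual. For (i)$\Rightarrow$(ii) you \emph{derive} $\lambda$ by solving the quadratic and selecting the root via the sign of $c+\lambda r^{2}$, whereas the paper \emph{postulates} the formula for $\lambda$ and verifies the angle directly; your route explains where the formula comes from and makes the uniqueness claim transparent. For (ii)$\Rightarrow$(i) you square each equation separately (keeping $\cos^{2}\theta$) and subtract, obtaining the factorization $\sin^{2}\theta\cdot\lambda(b-a)\bigl(2c+\lambda(a+b)\bigr)=0$; the paper instead equates the two cosines, cross-multiplies to eliminate $\theta$ entirely, and expands to get $\lambda(a-b)(c^{2}-ab)\bigl(\lambda(a+b)+2c\bigr)=0$. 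Both factorizations share the decisive factor $\lambda(a+b)+2c$, and the ensuing treatment of the subcase $\lambda=-2c/(a+b)$ via the opposite-sign observation is the same in both arguments. Your handling of the boundary case $\cos\theta=0$ and of the nonvanishing of $\lambda$ is fine.
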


\begin{proof}
(i)$\Rightarrow$(ii) We may suppose that $\langle x, y\rangle \neq 0$. Assume that $\|x\|^2=\|y\|^2=\alpha$. Put
$$\lambda:=-\frac{\langle x, y\rangle-\cot\theta\sqrt{\alpha^2-{\langle x, y\rangle}^2}}{\alpha}.$$
Note that $\lambda\neq0$. Indeed, if $\lambda=0$, then $\langle x, y\rangle=\cot\theta\sqrt{\alpha^2-{\langle x, y\rangle}^2}$. So, $\cos\theta=\frac{\langle x, y\rangle}{\|x\|\|y\|}$, which is impossible. We have
$$\|x+\lambda y\|^2=\|x\|^2+\lambda^2\|y\|^2+2\lambda\langle x, y\rangle=(\lambda^2+1)\alpha+2\lambda\langle x, y\rangle$$
and
$$\|y+\lambda x\|^2=\|y\|^2+\lambda^2\|x\|^2+2\lambda\langle x, y\rangle=(\lambda^2+1)\alpha+2\lambda\langle x, y\rangle.$$
Thus we get
\begin{align}\label{id..25}
\|x+\lambda y\|&=\|y+\lambda x\|\nonumber
\\&=\sqrt{(\lambda^2+1)\alpha+2\lambda\langle x, y\rangle}\nonumber
\\&=\Big[\frac{{\langle x, y\rangle}^2}{\alpha}-2\frac{\langle x, y\rangle\cot\theta\sqrt{\alpha^2-{\langle x, y\rangle}^2}}{\alpha}+\alpha\cot^2\theta\nonumber
\\&\hspace{1cm}-\frac{{\langle x, y\rangle}^2\cot^2\theta}{\alpha}+\alpha-\frac{2{\langle x, y\rangle}^2}{\alpha}+2\frac{\langle x, y\rangle\cot\theta\sqrt{\alpha^2-{\langle x, y\rangle}^2}}{\alpha}\Big]^\frac{1}{2}\nonumber
\\&=\frac{\sqrt{\alpha^2-{\langle x, y\rangle}^2}}{\sqrt{\alpha}\sin\theta}.
\end{align}
Since $\|x\|=\|y\|$, we have
\begin{align}\label{id..5}
\langle y+\lambda x, x\rangle&=\langle x+\lambda y, y\rangle\nonumber
\\&=\langle x, y\rangle+\lambda\|y\|^2\nonumber
\\&=\langle x, y\rangle-\left(\frac{\langle x, y\rangle-\cot\theta\sqrt{\alpha^2-{\langle x, y\rangle}^2}}{\alpha}\right)\alpha\nonumber
\\&=\cot\theta\sqrt{\alpha^2-{\langle x, y\rangle}^2}.
\end{align}
Notice that since $x$ and $y$ are independent, $\sqrt{\alpha^2-{\langle x, y\rangle}^2} \neq 0$. It follows from (\ref{id..25}) and (\ref{id..5}) that
$$\frac{\langle x+\lambda y, y\rangle}{\|x+\lambda y\|\|y\|}=\frac{\langle y+\lambda x, x\rangle}{\|y+\lambda x\|\|x\|}=\frac{\cot\theta\sqrt{\alpha^2-{\langle x, y\rangle}^2}}{\frac{\sqrt{\alpha^2-{\langle x, y\rangle}^2}}{\sqrt{\alpha}\sin\theta}\sqrt{\alpha}}=\cos\theta.$$
Thus $x+\lambda y\underset{\theta}{\angle} y$ and $y+\lambda x\underset{\theta}{\angle} x.$
In addition, the uniqueness of $\lambda$ is concluded from the equality of the cosine of $\widehat{(x+\lambda y, y)}$ and that of $\widehat{(y+\lambda x, x)}$.

\noindent
(ii)$\Rightarrow$(i)\,Suppose (ii) holds. Since $\widehat{(x+\lambda y, y)}=\theta=\widehat{(y+\lambda x, x)}$, we have
\begin{align}\label{id.1}
\frac{\langle x+\lambda y, y\rangle}{\|x+\lambda y\|\|y\|}=\frac{\langle y+\lambda x, x\rangle}{\|y+\lambda x\|\|x\|}.
\end{align}
Therefore,
\begin{align*}
\Big[{\langle x, y\rangle}^2&+\lambda^2\|y\|^4+2\lambda\|y\|^2\langle x, y\rangle\Big]\Big[\|y\|^2+\lambda^2\|x\|^2+2\lambda\langle x, y\rangle\Big]\|x\|^2
\\&=\Big[\langle x+\lambda y, y\rangle\|y+\lambda x\|\|x\|\Big]^2
\\&=\Big[\langle y+\lambda x, x\rangle\|x+\lambda y\|\|y\|\Big]^2\hspace{5cm}(\mbox{by}\,(\ref{id.1}))
\\&=\Big[{\langle x, y\rangle}^2+\lambda^2\|x\|^4+2\lambda\|x\|^2\langle x, y\rangle\Big]\Big[\|x\|^2+\lambda^2\|y\|^2+2\lambda\langle x, y\rangle\Big]\|y\|^2
\end{align*}
A straightforward computation shows that
$$\lambda(\|x\|^2-\|y\|^2)({\langle x, y\rangle}^2-\|x\|^2\|y\|^2)\Big[\lambda(\|x\|^2+\|y\|^2)+2{\langle x, y\rangle}\Big]=0.$$
If $\langle x, y\rangle =0$, then by identity (\ref{id.1}), we arrive at $\|x\|=\|y\|$. So, we may suppose that $\langle x, y\rangle \neq 0$, and two cases occur:

(1) If $\lambda(\|x\|^2+\|y\|^2)+2{\langle x, y\rangle}=0$, then $\lambda=-\frac{2{\langle x, y\rangle}}{\|x\|^2+\|y\|^2}$. Hence by identity (\ref{id.1}), we deduce that
$$\frac{\|x\|^2-\|y\|^2}{\|x+\lambda y\|\|y\|}=\frac{\|y\|^2-\|x\|^2}{\|y+\lambda x\|\|x\|},$$
whence $\|x\|=\|y\|$.

(2) If $\lambda(\|x\|^2-\|y\|^2)({\langle x, y\rangle}^2-\|x\|^2\|y\|^2)=0$, then because of $x$ and $y$ are independent we get $\lambda({\langle x, y\rangle}^2-\|x\|^2\|y\|^2)\neq0$. We conclude that $\|x\|=\|y\|$. Thus, (i) holds true.
\end{proof}

\begin{remark}
The case $\theta = 0$ can take place only for parallel vectors $x,y$ with $\langle x,y\rangle$ positive and $\lambda = 1$, or for $\langle x,y\rangle$ negative and $\lambda = -1$. The case $\theta = \pi$ can take place only for parallel vectors $x,y$ with $\langle x,y\rangle$ negative with $\lambda = 1$, and for $\langle x,y\rangle$ positive with $\lambda = 1$.
\end{remark}

Putting $\theta = \frac{\pi}{2}$ in Proposition \ref{lm.28}, we get the next result.

\begin{proposition}\label{pr.21}
Let $\mathcal{X}$ be a real inner product space and $x, y$ be two nonorthogonal, nonparallel vectors in $\mathcal{X}$. The following conditions are equivalent:
\begin{itemize}
\item[(i)] $\|x\|=\|y\|$;
\item[(ii)] there exists a unique nonzero scalar $\lambda$, given by $\lambda=-\frac{\langle x, y\rangle}{\|x\|^2}$, such that
$$x+\lambda y\perp y \quad \mbox{and} \quad y+\lambda x\perp x.$$
\end{itemize}
\end{proposition}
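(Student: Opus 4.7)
The plan is to deduce this proposition as the immediate specialization of Proposition \ref{lm.28} at $\theta = \pi/2$. First I would check that the hypotheses translate correctly: the angle $\widehat{(x,y)}$ is only defined for nonzero vectors, and for nonzero vectors being nonparallel is the same as being linearly independent, so the ``independent'' hypothesis of Proposition \ref{lm.28} is satisfied; similarly ``nonorthogonal'' means $\widehat{(x,y)} \neq \pi/2 = \theta$, which matches the other hypothesis of Proposition \ref{lm.28}.

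Next I would specialize the conclusion. The angle relation $u \underset{\pi/2}{\angle} v$ is literally $u \perp v$, so condition (ii) of Proposition \ref{lm.28} becomes exactly the two orthogonality relations stated here. Substituting $\cot(\pi/2) = 0$ into the explicit formula for $\lambda$ in Proposition \ref{lm.28} gives
$$\lambda = -\frac{\langle x, y\rangle - \cot(\pi/2)\sqrt{\|x\|^4 - \langle x, y\rangle^2}}{\|x\|^2} = -\frac{\langle x, y\rangle}{\|x\|^2},$$
which is the formula claimed, and uniqueness of $\lambda$ is inherited from the ``moreover'' clause of Proposition \ref{lm.28}.

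As a sanity check on the formula (not strictly needed for the proof, but illuminating), for this $\lambda$ one computes $\langle y + \lambda x, x\rangle = \langle x, y\rangle + \lambda\|x\|^2 = 0$ automatically, whereas $\langle x + \lambda y, y\rangle = \langle x, y\rangle\bigl(1 - \|y\|^2/\|x\|^2\bigr)$, which vanishes precisely when $\|x\| = \|y\|$ (using that $\langle x, y\rangle \neq 0$ in the nonorthogonal case). So the genuine content of the proposition is: one of the two orthogonality conditions is automatic, and the other then forces the norms to coincide.

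There is no real obstacle here; the only thing to be careful about is to phrase the equivalences of hypotheses between ``nonparallel / nonorthogonal'' and ``independent / $\widehat{(x,y)} \neq \theta$'' before invoking Proposition \ref{lm.28}. The degenerate sub-case where $\langle x, y\rangle = 0$ is excluded by the nonorthogonality assumption, so (unlike in the proof of Proposition \ref{lm.28}) no separate argument is needed for it.
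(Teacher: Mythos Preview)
Your proposal is correct and follows exactly the paper's approach: the paper derives this proposition in one line by specializing Proposition~\ref{lm.28} to $\theta=\pi/2$, and you carry out precisely that specialization, with the added care of verifying that the hypotheses (nonparallel $\leftrightarrow$ independent, nonorthogonal $\leftrightarrow$ $\widehat{(x,y)}\neq\theta$) and the formula for $\lambda$ match up.
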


\begin{corollary}\label{cr.21.1}
Let $\mathcal{X}$ be a real inner product space and $x, y$ be two nonorthogonal vectors in $\mathcal{X}$. The following conditions are equivalent:
\begin{itemize}
\item[(i)] there exists a unique nonzero scalar $\lambda$, given by $\lambda=-\frac{\langle x, y\rangle}{\|x\|^2}$, such that $x+\lambda y\perp y$ and $y+\lambda x\perp x$;
\item[(ii)] there exists a unique scalar $\mu$ of modulus one, given by $\mu=\frac{\langle x, y\rangle}{|\langle x, y\rangle|}$, such that $x+\mu y\perp x-\mu y$.
\end{itemize}
Furthermore, each one of the assertions above implies $\widehat{(x, x+y)}=\widehat{(y, y+x)}$.
\end{corollary}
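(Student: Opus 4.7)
The plan is to reduce both conditions (i) and (ii) to the single hypothesis $\|x\|=\|y\|$, from which the ``furthermore'' clause follows by a direct cosine computation. The heavy lifting for condition (i) is essentially already contained in Proposition \ref{pr.21}; condition (ii) can be handled by a one-line expansion.

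For (i), I would unfold $x+\lambda y\perp y$ into $\langle x,y\rangle+\lambda\|y\|^{2}=0$, conclude that $\lambda=-\langle x,y\rangle/\|y\|^{2}$, and compare with the prescribed formula $\lambda=-\langle x,y\rangle/\|x\|^{2}$ to force $\|x\|=\|y\|$ (using $\langle x,y\rangle\neq 0$). Conversely, when $\|x\|=\|y\|$, substituting the prescribed $\lambda$ into $\langle x+\lambda y,y\rangle$ and $\langle y+\lambda x,x\rangle$ yields zero in both cases, and uniqueness of $\lambda$ is immediate by reading either orthogonality equation as a linear equation in $\lambda$.

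For (ii), the central identity is
\[
\langle x+\mu y,\,x-\mu y\rangle = \|x\|^{2}-\mu^{2}\|y\|^{2},
\]
valid in a real inner product space; combined with $|\mu|=1$ this shows that $x+\mu y\perp x-\mu y$ if and only if $\|x\|=\|y\|$. The distinguished choice $\mu=\langle x,y\rangle/|\langle x,y\rangle|$ then furnishes an explicit unit-modulus scalar doing the job.

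The ``furthermore'' clause follows from $\|x\|=\|y\|$ via
\[
\cos\widehat{(x,x+y)}=\frac{\|x\|^{2}+\langle x,y\rangle}{\|x\|\,\|x+y\|}=\frac{\|y\|^{2}+\langle x,y\rangle}{\|y\|\,\|x+y\|}=\cos\widehat{(y,y+x)}.
\]
The point to watch is the uniqueness in (ii): over the reals both $\mu=1$ and $\mu=-1$ satisfy $\mu^{2}=1$, so the stated uniqueness must be understood as singling out the sign that aligns with $\langle x,y\rangle$ rather than as literal uniqueness among all unit-modulus solutions of the orthogonality relation.
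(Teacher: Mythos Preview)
Your proposal is correct and follows essentially the same route as the paper's proof: both reduce (i) and (ii) to the condition $\|x\|=\|y\|$ (the paper invokes Proposition~\ref{pr.21} for the equivalence with (i), while you unpack the same linear-equation computation directly), and both obtain the angle equality from the identical cosine calculation. Your explicit remark that the uniqueness claim in (ii) cannot be taken literally---since both $\mu=1$ and $\mu=-1$ satisfy $\langle x+\mu y,\,x-\mu y\rangle=\|x\|^{2}-\mu^{2}\|y\|^{2}=0$---is a valid observation that the paper's proof does not address.
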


\begin{proof}
(i)$\Rightarrow$(ii) Set $\mu=\frac{\langle x, y\rangle}{|\langle x, y\rangle|}$. So $|\mu|=1$. By Proposition \ref{pr.21}, $\|x\|=\|y\|$. Now simple computations show that $x+\mu y\perp x-\mu y$.\\
(ii)$\Rightarrow$(i) Since $|\mu|=1$ and $x+\mu y\perp x-\mu y$, we conclude that $\|x\|=\|y\|$. Therefore, (i) holds by Proposition \ref{pr.21}.\\
Assume, that there exists a nonzero scalar $\lambda$ such that $x+\lambda y\perp y$ and $y+\lambda x\perp x$. So by Proposition \ref{pr.21}, $\|x\|=\|y\|$. Therefore
$$\frac{\langle x, x+y\rangle}{\|x\|\|x+y\|}=\frac{\|x\|}{\|x+y\|}+\frac{\langle x, y\rangle}{\|x\|\|x+y\|}=\frac{\|y\|}{\|y+x\|}+\frac{\langle y, x\rangle}{\|y\|\|y+x\|}=\frac{\langle y, y+x\rangle}{\|y\|\|y+x\|},$$
whence $\widehat{(x, x+y)}=\widehat{(y, x+y)}$.
\end{proof}

We are now in a position to establish the main result of this section.

\begin{theorem}\label{th.23}
Let $\mathcal{X}$ and $\mathcal{Y}$ be two real inner product spaces. For a nonzero linear mapping $T:\mathcal{X}\longrightarrow \mathcal{Y}$ the following statements are equivalent:
\begin{itemize}
\item[(i)] there exists $\gamma>0$ such that $\|Tx\|=\gamma\|x\|$ for all $x\in \mathcal{X}$, i.e., $T$ is a similarity;
\item[(ii)] $T$ is injective and $\frac{\langle Tx, Ty\rangle}{\|Tx\|\|Ty\|}=\frac{\langle x, y\rangle}{\|x\|\|y\|}$ for all $x, y\in \mathcal{X}\smallsetminus\{0\}$;
\item[(iii)] $x\perp y\, \Leftrightarrow \,Tx\perp Ty$ for all $x, y\in \mathcal{X}$, i.e., $T$ is strongly orthogonality preserving;
\item[(iv)] $\|x\|=\|y\|\, \Leftrightarrow \,\|Tx\|=\|Ty\|$ for all $x, y\in \mathcal{X}$;
\item[(v)] $\|x\|=\|y\|\, \Rightarrow \,\|Tx\|=\|Ty\|$ for all $x, y\in \mathcal{X}$;
\item[(vi)] $\|x\|\leq\|y\|\, \Rightarrow \,\|Tx\|\leq\|Ty\|$ for all $x, y\in \mathcal{X}$;
\item[(vii)] $x\perp y\, \Rightarrow \,Tx\perp Ty$ for all $x, y\in \mathcal{X}$, i.e., $T$ is orthogonality preserving.
\end{itemize}
\end{theorem}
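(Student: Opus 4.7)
The plan is to close the cycle
\[
\mathrm{(i)}\Rightarrow\mathrm{(ii)}\Rightarrow\mathrm{(iii)}\Rightarrow\mathrm{(iv)}\Rightarrow\mathrm{(v)}\Rightarrow\mathrm{(vi)}\Rightarrow\mathrm{(vii)}\Rightarrow\mathrm{(i)}.
\]
Most of these links are routine. For (i)$\Rightarrow$(ii), polarize: $\|Tx\|=\gamma\|x\|$ yields $\langle Tx,Ty\rangle=\gamma^{2}\langle x,y\rangle$, so cosines agree, and $\gamma>0$ forces injectivity. For (ii)$\Rightarrow$(iii), orthogonality of nonzero vectors is the vanishing of the cosine, and injectivity of $T$ keeps $Tx,Ty$ nonzero when $x,y$ are. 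For (iv)$\Rightarrow$(v), nothing is to be shown. For (v)$\Rightarrow$(vi), given $\|x\|\leq\|y\|$ with $y\neq 0$, the scalar $c=\|x\|/\|y\|\in[0,1]$ satisfies $\|cy\|=\|x\|$, so (v) gives $\|Tx\|=\|T(cy)\|=c\|Ty\|\leq\|Ty\|$; if $y=0$ both sides vanish. For (vi)$\Rightarrow$(vii), use that $x\perp y$ is equivalent to $\|x+y\|=\|x-y\|$; both of the inequalities $\|x\pm y\|\leq\|x\mp y\|$ then transfer under (vi) and force $\|Tx+Ty\|=\|Tx-Ty\|$, i.e., $Tx\perp Ty$. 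Finally (vii)$\Rightarrow$(i) is Chmieli\'{n}ski's theorem \cite{Ch.1}.

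The substantive step is (iii)$\Rightarrow$(iv), where Proposition \ref{pr.21} earns its keep in both directions. First, (iii) forces $T$ to be injective: if $Tx=0$, then $Tx\perp Ty$ for every $y$, so by the reverse implication in (iii), $x\perp y$ for every $y$, whence $x=0$. For the forward direction $\|x\|=\|y\|\Rightarrow\|Tx\|=\|Ty\|$, split into three cases. If $x$ and $y$ are parallel, $\|x\|=\|y\|$ forces $y=\pm x$, and the conclusion is immediate. If $x\perp y$, then $\langle x+y,x-y\rangle=\|x\|^{2}-\|y\|^{2}=0$, so by (iii), $\langle Tx+Ty,Tx-Ty\rangle=0$ and thus $\|Tx\|=\|Ty\|$. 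In the remaining case, $x$ and $y$ are nonorthogonal and nonparallel, so Proposition \ref{pr.21} produces $\lambda\neq 0$ with $x+\lambda y\perp y$ and $y+\lambda x\perp x$; (iii) carries these to $Tx+\lambda Ty\perp Ty$ and $Ty+\lambda Tx\perp Tx$. Since $T$ is injective, $Tx,Ty$ remain nonparallel, and by the reverse of (iii) they remain nonorthogonal, so Proposition \ref{pr.21} applied to $Tx,Ty$ delivers $\|Tx\|=\|Ty\|$. The converse implication $\|Tx\|=\|Ty\|\Rightarrow\|x\|=\|y\|$ is handled symmetrically: sort $Tx,Ty$ into the three cases (parallel, orthogonal, neither), pull the resulting orthogonality relations back to $x,y$ via the reverse direction in (iii) together with injectivity, and then invoke Proposition \ref{pr.21} for $x,y$.

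The main obstacle is precisely this compatibility: in order to apply Proposition \ref{pr.21} on both sides of $T$, one must verify that the hypotheses of being nonparallel and nonorthogonal are preserved in \emph{both} directions, and this is where the bi-implication in (iii) and the injectivity it forces are both essential. Once (iii)$\Rightarrow$(iv) is in hand, the remaining implications are elementary and the seven conditions collapse into one.
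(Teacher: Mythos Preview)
Your proof is correct, but it departs from the paper most noticeably at (iii)$\Rightarrow$(iv), where you work much harder than necessary. The identity $\langle x+y,x-y\rangle=\|x\|^{2}-\|y\|^{2}$ holds in any real inner product space with no hypotheses on $x,y$, so the equivalence $\|x\|=\|y\|\Leftrightarrow x+y\perp x-y$ is universal. The paper therefore dispatches (iii)$\Rightarrow$(iv) in a single line:
\[
\|x\|=\|y\|\ \Leftrightarrow\ x+y\perp x-y\ \Leftrightarrow\ Tx+Ty\perp Tx-Ty\ \Leftrightarrow\ \|Tx\|=\|Ty\|.
\]
This is exactly the argument you wrote down in your ``orthogonal'' subcase---notice that you never actually used $x\perp y$ there---and it covers all cases at once, so your invocation of Proposition~\ref{pr.21} is valid but superfluous. (Proposition~\ref{pr.21} earns its keep later, in the angle-$\theta$ Theorem~\ref{th.29}, not here.) The remaining differences are minor: for (vi)$\Rightarrow$(vii) the paper uses the Birkhoff--James characterization $x\perp y\Leftrightarrow\|x\|\leq\|x+ry\|$ for all $r\in\mathbb{R}$ instead of your Roberts-type criterion $\|x+y\|=\|x-y\|$, and for (vii)$\Rightarrow$(i) the paper gives a short self-contained argument (fix a unit vector $x_{0}$, show $Tx_{0}\neq 0$, then apply (vii) to $\tfrac{w}{\|w\|}\pm x_{0}$) rather than citing \cite{Ch.1}. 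Both of your alternatives are perfectly fine.
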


\begin{proof}
(i)$\Rightarrow$(ii) Clearly $T$ is injective. In addition, for all $x, y\in \mathcal{X}\smallsetminus\{0\}$ we have
\begin{align*}
\frac{\langle Tx, Ty\rangle}{\|Tx\|\|Ty\|}&=\frac{\frac{1}{4}\Big[\|T(x+y)\|^2-\|T(x-y)\|^2\Big]}{(\gamma\|x\|)(\gamma\|y\|)}
\\&=\frac{\frac{1}{4}\Big[\gamma^2\|x+y\|^2-\gamma^2\|x-y\|^2\Big]}{\gamma^2\|x\|\|y\|}
\\&=\frac{\langle x, y\rangle}{\|x\|\|y\|}.
\end{align*}
(ii)$\Rightarrow$(iii) This implication is trivial.\\
(iii)$\Rightarrow$(iv) $$\|x\|=\|y\| \Leftrightarrow x+y \perp x-y \Leftrightarrow Tx+Ty\perp Tx-Ty \Leftrightarrow \|Tx\|=\|Ty\| \,.$$
(iv)$\Rightarrow$(v) This implication is trivial.\\
(v)$\Rightarrow$(vi) Let $\|x\|\leq\|y\|$. Since $\|x\| = \left\|\frac{\|x\|}{\|y\|}y\right\|$, so with condition (v) we obtain
$$\|Tx\| = \left\|T(\frac{\|x\|}{\|y\|}y)\right\| = \frac{\|x\|}{\|y\|}\,\|Ty\|\leq \,\|Ty\|.$$
(vi)$\Rightarrow$(vii) Suppose (vi) holds and $x\perp y$. Hence, $\|x\|\leq\|x + ry\|$ for all $r\in\mathbb{R}$. Thus $\|Tx\|\leq\|Tx + rTy\|$ for all $r\in\mathbb{R}$, and therefore, $Tx\perp Ty$.\\
(vii)$\Rightarrow$(i) Let $x_0\in \mathcal{X}$ be fixed such that $\|x_0\|=1$. So $Tx_0\neq0$. Indeed, if $Tx_0 = 0$, then for any $0 \neq z\in \mathcal{X}$ we have $\langle \frac{z}{\|z\|} + x_0, \frac{z}{\|z\|} - x_0\rangle = 0$. The orthogonality preserving property of $T$ infers $\|Tz\| = 0$, whence $T = 0$, which is a contradiction.\\
Since $\langle \frac{w}{\|w\|} + x_0, \frac{w}{\|w\|} - x_0\rangle = 0$ for every $w\in \mathcal{X}\smallsetminus\{0\}$, we get $\|Tw\| = \|Tx_0\|\,\|w\|$ by assumption and, hence, assertion (i).
\end{proof}

The following example shows that Theorem \ref{th.23} fails if the supposition of linearity is dropped.

\begin{example}
Let $\mathcal{X}$ be a real inner product space. For the nonlinear mapping $T:\mathcal{X}\longrightarrow \mathcal{X}$ defined by $T(x) = \|x\|^2 x$ we have $\|x\|=\|y\|\, \Rightarrow \,\|Tx\|=\|Ty\|$ for all $x, y\in \mathcal{X}$, but $T$ is clearly not a similarity.
\end{example}

\begin{corollary}\label{co.27}
Let $\mathcal{X}$ be a real vector space equipped with two inner products ${\langle ., .\rangle}_1$ and ${\langle ., .\rangle}_2$ generating the norms ${\|.\|}_1$, ${\|.\|}_2$ and orthogonality relations $\perp_1$, $\perp_2$, respectively. Then the following conditions are equivalent:
\begin{itemize}
\item[(i)]$x\perp_1y\, \Rightarrow \,x\perp_2y \qquad (x, y\in \mathcal{X})$;
\item[(ii)] $x+z\perp_1y+z\, \Rightarrow \,x+z\perp_2y+z \qquad (x, y, z\in \mathcal{X})$;
\item[(iii)] ${\|x\|}_1={\|y\|}_1\, \Rightarrow \,{\|x\|}_2={\|y\|}_2 \qquad (x, y\in \mathcal{X})$;
\item[(iv)] ${\|x\|}_1\leq{\|y\|}_1\, \Rightarrow \,{\|x\|}_2\leq{\|y\|}_2 \qquad (x, y\in \mathcal{X})$;
\item[(v)] ${\|x+z\|}_1={\|y+z\|}_1\, \Rightarrow \,{\|x+z\|}_2={\|y+z\|}_2 \qquad (x, y, z\in \mathcal{X})$;
\item[(vi)] ${\|x+z\|}_1\leq{\|y+z\|}_1\, \Rightarrow \,{\|x+z\|}_2\leq{\|y+z\|}_2 \qquad (x, y, z\in \mathcal{X})$;
\item[(vii)] $\frac{{\langle x, y\rangle}_1}{{\|x\|}_1{\|y\|}_1}=\frac{{\langle x, y\rangle}_2}{{\|x\|}_2{\|y\|}_2} \qquad (x, y\in \mathcal{X})$;
\item[(ix)] $\frac{{\langle x+z, y+z\rangle}_1}{{\|x+z\|}_1{\|y+z\|}_1}=\frac{{\langle x+z, y+z\rangle}_2}{{\|x+z\|}_2{\|y+z\|}_2} \qquad (x, y, z\in \mathcal{X})$;
\item[(x)] ${\|x\|}_2=\gamma{\|y\|}_1 \qquad (x, y\in \mathcal{X})$ with some $\gamma>0$;
\item[(xi)] ${\langle x, y\rangle}_2=\gamma^2{\langle x, y\rangle}_1 \qquad (x, y\in \mathcal{X})$ with some $\gamma>0$.
\end{itemize}
\end{corollary}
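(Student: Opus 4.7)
The plan is to reduce the entire corollary to Theorem \ref{th.23} applied to the identity map. Namely, consider $T = \mathrm{id}_{\mathcal{X}}$ regarded as a nonzero injective linear mapping from $(\mathcal{X}, {\langle \cdot, \cdot\rangle}_1)$ into $(\mathcal{X}, {\langle \cdot, \cdot\rangle}_2)$. Then each unshifted clause of the corollary is literally one of the clauses of Theorem \ref{th.23} with this choice of $T$: condition (i) of the corollary is (vii) of the theorem, condition (iii) is (v), condition (iv) is (vi), condition (vii) is (ii), and condition (x) (read as $\|x\|_2 = \gamma \|x\|_1$, the $y$ in the statement being an evident typo) is (i). Since Theorem \ref{th.23} asserts the equivalence of all these, we immediately obtain the equivalence of (i), (iii), (iv), (vii), (x).

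Next, each shifted clause (ii), (v), (vi), (ix) is trivially equivalent to its unshifted counterpart (i), (iii), (iv), (vii). In one direction, set $z = 0$; in the other direction, apply the unshifted hypothesis to the pair $x+z, y+z$, which is legitimate because $x, y, z$ are arbitrary and the pair $(x+z, y+z)$ ranges over all of $\mathcal{X} \times \mathcal{X}$ as $x, y$ vary. So nothing new happens in the shifted formulations.

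It remains to link (xi) to the rest of the list. Here I would use the polarization identity
\[
{\langle u, v\rangle}_i \;=\; \tfrac{1}{4}\bigl({\|u+v\|}_i^{\,2} - {\|u-v\|}_i^{\,2}\bigr) \qquad (i = 1, 2).
\]
If (x) holds, i.e.\ ${\|x\|}_2 = \gamma{\|x\|}_1$ for all $x$, then polarization yields ${\langle x, y\rangle}_2 = \gamma^2 {\langle x, y\rangle}_1$, which is (xi). Conversely, setting $y = x$ in (xi) gives ${\|x\|}_2^2 = \gamma^2 {\|x\|}_1^2$, hence (x). This closes the loop.

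The proof carries essentially no obstacle once Theorem \ref{th.23} is in hand, since that theorem is tailored precisely for this kind of comparison. The only points requiring a small amount of care are: (a) recognizing the typo in (x) and interpreting it as the statement that the two norms are proportional, and (b) checking that the shifted clauses add no genuine content, which follows from the freedom to take $z = 0$.
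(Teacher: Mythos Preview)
Your proposal is correct and follows exactly the approach of the paper, which simply says to take $T=\mathrm{id}:(\mathcal{X},\langle\cdot,\cdot\rangle_1)\to(\mathcal{X},\langle\cdot,\cdot\rangle_2)$ and apply Theorem~\ref{th.23}. In fact you supply more detail than the paper does, explicitly handling the shifted clauses via $z=0$, the typo in (x), and the passage between (x) and (xi) via polarization---all of which the paper leaves implicit in its one-line proof.
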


\begin{proof}
Take $\mathcal{X}=\mathcal{Y}$ and $T=id:(\mathcal{X}, {\langle ., .\rangle}_1)\longrightarrow (\mathcal{X}, {\langle ., .\rangle}_2)$, and apply Theorem \ref{th.23}.
\end{proof}

By the equivalence (i)$\Leftrightarrow$(ii) of Theorem \ref{th.23}, if $\theta\in(0, \pi)$ and $T:\mathcal{X}\longrightarrow \mathcal{Y}$ is a similarity, then $x\underset{\theta}{\angle} y\, \Leftrightarrow \,Tx\underset{\theta}{\angle} Ty$ for all $x, y\in \mathcal{X}$. The following theorem is an extension of Theorem \ref{th.23}.

\begin{theorem}\label{th.29}
Let $\theta\in(0, \pi)$ and $\mathcal{X}$ and $\mathcal{Y}$ be two real inner product spaces. If $T:\mathcal{X}\longrightarrow \mathcal{Y}$ is an injective nonzero linear mapping with the following properties:
\begin{itemize}
\item[(i)] $x\underset{\theta}{\angle} y\, \Leftrightarrow \,Tx\underset{\theta}{\angle} Ty$ for all $x, y\in \mathcal{X}$,
\item[(ii)] If $\|x\|=\|y\|$ with $x\underset{\theta}{\angle} y$, then $\|Tx\|=\|Ty\|$ for all $x, y\in \mathcal{X}$,
\end{itemize}
then $T$ is a similarity.
\end{theorem}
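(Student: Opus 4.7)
The plan is to reduce this to Theorem \ref{th.23} by showing that hypotheses (i) and (ii) together imply condition (v) of that theorem, namely $\|x\|=\|y\|\Rightarrow\|Tx\|=\|Ty\|$ for all $x,y\in\mathcal{X}$. Once this is established, Theorem \ref{th.23} immediately yields that $T$ is a similarity.

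So fix $x,y\in\mathcal{X}$ with $\|x\|=\|y\|$. First I dispose of the linearly dependent case: if $y=\alpha x$ with $\|\alpha x\|=\|x\|$, then either $x=0$ (trivial) or $\alpha=\pm 1$, and linearity of $T$ gives $\|Ty\|=|\alpha|\|Tx\|=\|Tx\|$. Assume henceforth that $x,y$ are linearly independent. If it happens that $x\underset{\theta}{\angle} y$, then hypothesis (ii) gives $\|Tx\|=\|Ty\|$ directly. The interesting case is when $x,y$ are independent and $\widehat{(x,y)}\neq\theta$.

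In this latter case, Proposition \ref{lm.28}, implication (i)$\Rightarrow$(ii), supplies a nonzero scalar $\lambda$ with
\[
x+\lambda y\underset{\theta}{\angle} y\quad\text{and}\quad y+\lambda x\underset{\theta}{\angle} x.
\]
By linearity of $T$ and hypothesis (i) applied in the forward direction, these pass to
\[
Tx+\lambda Ty\underset{\theta}{\angle} Ty\quad\text{and}\quad Ty+\lambda Tx\underset{\theta}{\angle} Tx.
\]
To conclude $\|Tx\|=\|Ty\|$ via the converse (ii)$\Rightarrow$(i) of Proposition \ref{lm.28}, I must verify that $Tx,Ty$ satisfy the hypotheses of that proposition. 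Injectivity of the linear map $T$ ensures that $Tx,Ty$ remain linearly independent. Moreover, if we had $\widehat{(Tx,Ty)}=\theta$, then hypothesis (i) applied in the reverse direction would force $\widehat{(x,y)}=\theta$, contradicting our case assumption; hence $\widehat{(Tx,Ty)}\neq\theta$. Proposition \ref{lm.28} then yields $\|Tx\|=\|Ty\|$, completing the verification of condition (v).

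The main technical point is confirming that the hypotheses ``independent'' and ``angle $\neq\theta$'' transfer under $T$; this is precisely where injectivity of $T$ and the biconditional form of hypothesis (i) are essential. Everything else is a clean case split followed by an invocation of Theorem \ref{th.23}.
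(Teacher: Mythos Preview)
Your proof is correct and follows essentially the same route as the paper's: both establish condition~(v) of Theorem~\ref{th.23} via a case split, applying Proposition~\ref{lm.28} in both directions on the independent, angle-$\neq\theta$ case, and then invoke Theorem~\ref{th.23}. Your version is in fact slightly more explicit in justifying $\widehat{(Tx,Ty)}\neq\theta$ via the reverse implication of hypothesis~(i), which the paper asserts without comment.
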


\begin{proof}
Assume, $x, y\in \mathcal{X}$ and $\|x\|=\|y\|$. If $x\underset{\theta}{\angle} y$, then by (ii) we have $\|Tx\|=\|Ty\|$.
Consider the case when $\theta\neq\widehat{(x, y)}$. If $x$ and $y$ are linearly dependent, we get $\|Tx\|=\|Ty\|$ since $\|x\|=\|y\|$. Thus, in the case that $x$ and $y$ are independent, there exists a nonzero scalar $\lambda$ such that
$x+\lambda y\underset{\theta}{\angle} y $ and $y+\lambda x\underset{\theta}{\angle} x$ by Proposition \ref{lm.28}. By (i), we get $Tx+\lambda Ty\underset{\theta}{\angle} Ty $ and $Ty+\lambda Tx\underset{\theta}{\angle} Tx$. Since $T$ is injective, $Tx$ and $Ty$ are non-parallel.The condition $\theta\neq\widehat{(Tx, Ty)}$ and Proposition \ref{lm.28} imply $\|Tx\|=\|Ty\|$.\\
Thus, $\|x\|=\|y\|\, \Rightarrow \,\|Tx\|=\|Ty\|$ for all $x, y\in \mathcal{X}$. By the equivalence (i)$\Leftrightarrow$(v) of Theorem \ref{th.23}, $T$ is a similarity.
\end{proof}

\begin{remark}\label{re.210}
Note, that for $\theta=0, \pi$, i.e. if $x\parallel y$, $x, y$ are linearly dependent. For every linear mapping $T$, therefore $Tx$ and $Ty$ are linearly dependent too. Thus we have $x\parallel y\, \Rightarrow \,Tx\parallel Ty$ for any liner mapping $T$ and any $x, y\in \mathcal{X}$.
\end{remark}


\section{Orthogonality preserving mappings in inner product $C^{*}$-modules}

The notion of orthogonality in an arbitrary normed space $(\mathcal{V}, \|.\|)$ may be introduced in various ways. Many mathematicians have introduced different types of orthogonality. Roberts \cite{R} introduced the first type of orthogonality : $x \in \mathcal{V}$ is said to be orthogonal in the sense of Roberts to $y \in \mathcal{V}$ if $\|x+ty\|=\|x-ty\|$ for all $t\in \mathbb{R}$.
Later Birkhoff \cite{B} introduced one of the most important types of orthogonality: $x$ is said to be Birkhoff orthogonal to $y$ if $\|x\|\leq\|x + \lambda y\|$ for all $\lambda\in \mathbb{C}.$ For inner product spaces, these definitions are equivalent to the usual definition of orthogonality. Characterizations of the Birkhoff orthogonality in the framework of Hilbert $C^{*}$-modules were obtained in \cite{A.R, B.G}. There are other ways to generalize the Roberts orthogonality. In the following auxiliary lemma we state some facts involving the ``$C^*$-valued norm'' $|\cdot|$ on an inner product $\mathscr{A}$-module, see also \cite[Proposition 2.1]{I.T}.

\begin{lemma}\label{lm.211}
Let $\mathscr{E}$ be an inner product $\mathscr{A}$-module and $x, y\in \mathscr{E}$. The following statements are mutually equivalent:
\begin{itemize}
\item[(i)] $\langle x, y\rangle=0$;
\item[(ii)] $\langle xb, ya\rangle=0$ for all $a, b\in \mathscr{A}$;
\item[(iii)] $|x+ya|=|x-ya|$ for all $a\in \mathscr{A}$;
\item[(iv)] $|x+\lambda y|=|x-\lambda y|$ for all $\lambda\in \mathbb{C}$;
\end{itemize}
\end{lemma}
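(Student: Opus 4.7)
The plan is to go around the cycle $(\mathrm{i}) \Rightarrow (\mathrm{ii}) \Rightarrow (\mathrm{iii}) \Rightarrow (\mathrm{iv}) \Rightarrow (\mathrm{i})$, using only the inner product $C^*$-module axioms together with an approximate unit of $\mathscr{A}$ when needed. The implication $(\mathrm{i}) \Rightarrow (\mathrm{ii})$ is immediate from the identity $\langle xb, ya\rangle = b^*\langle x, y\rangle a$, which vanishes as soon as $\langle x, y\rangle = 0$.

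For $(\mathrm{ii}) \Rightarrow (\mathrm{iii})$ I would first recover $(\mathrm{i})$ by a short $C^*$-trick. Writing $z := \langle x, y\rangle$, the hypothesis $b^*za = 0$ for all $a, b \in \mathscr{A}$, specialised to $b = z$ and $a = z^*$, gives $z^*zz^* = 0$; then $(zz^*)^2 = z(z^*zz^*) = 0$, and since $zz^*$ is positive, uniqueness of positive square roots forces $zz^* = 0$, whence $z = 0$. Once $\langle x, y\rangle = 0$ is in hand, the expansion
\begin{equation*}
|x \pm ya|^2 = \langle x, x\rangle \pm \bigl(\langle x, y\rangle a + a^*\langle y, x\rangle\bigr) + a^*\langle y, y\rangle a
\end{equation*}
shows the middle term vanishes, so $|x+ya|^2 = |x-ya|^2$ in the positive cone of $\mathscr{A}$, and $(\mathrm{iii})$ follows by uniqueness of positive square roots.

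For $(\mathrm{iii}) \Rightarrow (\mathrm{iv})$ I take an approximate unit $(e_i)_{i\in I}$ of $\mathscr{A}$ and set $a_i := \lambda e_i \in \mathscr{A}$. A standard computation using $\|a - ae_i\| \to 0$ for every $a\in\mathscr{A}$ gives $\|y - ye_i\| \to 0$, hence $ya_i \to \lambda y$ in $\mathscr{E}$. Since $u \mapsto |u|$ is norm-continuous on $\mathscr{E}$ (convergence in $\mathscr{E}$ forces convergence of $\langle u_n, u_n\rangle$ in $\mathscr{A}$, and the positive square root is norm-continuous on the positive cone), passing to the limit in $|x + ya_i| = |x - ya_i|$ yields $(\mathrm{iv})$.

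Finally, for $(\mathrm{iv}) \Rightarrow (\mathrm{i})$, equating the expansions of $|x \pm \lambda y|^2$ gives $\lambda \langle x, y\rangle + \overline{\lambda}\langle x, y\rangle^* = 0$ for every $\lambda \in \mathbb{C}$. Taking $\lambda = 1$ shows $\langle x, y\rangle$ is skew-adjoint, while $\lambda = i$ shows it is self-adjoint; together these force $\langle x, y\rangle = 0$. The only genuinely non-algebraic step is $(\mathrm{iii}) \Rightarrow (\mathrm{iv})$, which is the unique place where the topological structure of $\mathscr{A}$ (through the approximate unit and the continuity of the $C^*$-valued modulus) enters; in the unital case it would collapse to the choice $a = \lambda\cdot 1_{\mathscr{A}}$.
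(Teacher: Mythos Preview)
Your cycle is correct, and the endpoints $(\mathrm{i})\Rightarrow(\mathrm{ii})$ and $(\mathrm{iv})\Rightarrow(\mathrm{i})$ match the paper. The interesting difference is \emph{where} the approximate unit is spent.

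For $(\mathrm{ii})\Rightarrow(\mathrm{iii})$ the paper uses the approximate unit: since $\langle xe_i, ye_i\rangle = e_i\langle x,y\rangle e_i = 0$ by hypothesis, one estimates $\|\langle x,y\rangle\| = \|e_i\langle x,y\rangle e_i - \langle x,y\rangle\| \to 0$. You instead give a purely algebraic argument ($z^*zz^*=0 \Rightarrow (zz^*)^2=0 \Rightarrow z=0$), which is cleaner and avoids any topology at this step.

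For $(\mathrm{iii})\Rightarrow(\mathrm{iv})$ the roles are reversed. The paper stays algebraic: it plugs the single element $a=\langle y,x\rangle$ into $(\mathrm{iii})$, and the resulting identity $|x+y\langle y,x\rangle|^2=|x-y\langle y,x\rangle|^2$ collapses to $\langle x,y\rangle\langle y,x\rangle=0$, hence $\langle x,y\rangle=0$, from which $(\mathrm{iv})$ is immediate. You instead approximate $\lambda y$ by $y(\lambda e_i)$ and pass to the limit using continuity of $u\mapsto|u|$.

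Both routes are valid and each invokes the approximate unit exactly once, just at different links of the chain. The paper's choice $a=\langle y,x\rangle$ is arguably the sharpest single move, extracting $(\mathrm{i})$ directly from $(\mathrm{iii})$; your algebraic treatment of $(\mathrm{ii})\Rightarrow(\mathrm{i})$ is a nice self-contained $C^*$-trick that the paper does not use.
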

\begin{proof}
(i)$\Rightarrow$(ii) The implication is trivial.

(ii)$\Rightarrow$(iii) Let $(e_i)_{i\in I}$ be an approximate unit for $\mathscr{A}$. Since (ii) is valid we have
\begin{align*}
\|\langle x, y\rangle\|& = \|\langle xe_i, ye_i\rangle - \langle x, y\rangle\|
\\& \leq \|e_i\|\|\langle x, y\rangle e_i - \langle x, y\rangle\| + \|e_i\langle x, y\rangle - \langle x, y\rangle\|.
\end{align*}
By taking limit we get $\langle x, y\rangle = 0$. Thus, $|x+ya|=\Big[|x|^2+|ya|^2\Big]^\frac{1}{2}=|x-ya|$ for all $a\in \mathscr{A}$.

(iii)$\Rightarrow$(iv) We may assume that $y\neq0$. Let $a =\langle y, x\rangle\in\mathscr{A}$. Then
\begin{align*}
|x|^2+2\langle x, y\rangle\langle y, x\rangle+|y\langle y, x\rangle|^2&=|x|^2+\langle x, y\rangle a+a^*\langle y, x\rangle+|ya|^2
\\&=|x+ya|^2
\\&=|x-ya|^2
\\&=|x|^2-\langle x, y\rangle a-a^*\langle y, x\rangle+|ya|^2
\\&=|x|^2-2\langle x, y\rangle\langle y, x\rangle+|y\langle y, x\rangle|^2,
\end{align*}
which implies $\langle x, y\rangle\langle y, x\rangle=0$ and so $\langle x, y\rangle=0$. Thus, $|x+\lambda y|= \Big[|x|^2 + |\lambda|^2 |y|^2\Big]^\frac{1}{2} = |x-\lambda y|$ for all $\lambda\in \mathbb{C}$.

(iv)$\Rightarrow$(i) Suppose that (iv) holds. Therefore $\lambda\langle y, x\rangle + \overline{\lambda}\langle x, y\rangle =0$ for all $\lambda\in \mathbb{C}$. So, for $\lambda = 1, -i$ we obtain $\langle y, x\rangle + \langle x, y\rangle =0$ and $-i\langle y, x\rangle +i \langle x, y\rangle =0$. Hence $\langle x, y\rangle=0$.
\end{proof}

\begin{remark} In Theorem 3.1 of \cite{A.R.1} three further equivalent inequality conditions to assertion (i) above have been found: $\,$ (vi) $|x|^2 \leq |x + ya|^2$ for all $a \in {\mathscr{A}}$; $\,$ (vii) $|x|^2 \leq |x + \lambda y|^2$ for all $\lambda \in {\mathbb C}$; $\,$ (viii) $|x| \leq |x + ya|$ for all $a \in {\mathscr{A}}$. The authors of \cite{A.R.1} conjecture that assertion $\,$ (ix)
$| x | \leq |x + \lambda y|$ for all $\lambda \in {\mathbb C}$ $\,$ might be also equivalent to assertion (i). To verify this to the affirmative, one would have to prove the equivalence of (i) and (ix) merely for (atomic) von Neumann factors considered as Hilbert $C^*$-modules over themselves. Indeed, any full Hilbert $C^*$-module $\mathscr{E}$ can be isometrically embedded into its linking ($C^*$-)algebra as constructed by Brown, Green and Rieffel in \cite{BGR}. The $C^*$-algebra of coefficients $\mathscr{A}$, the Hilbert $\mathscr{A}$-module $\mathscr{E}$ and the $C^*$-algebra of all ``compact'' operators on $\mathscr{E}$ become full corners of the linking algebra carried by two orthogonal projections from the multiplier $C^*$-algebra of the linking algebra which sum up to the identity. The module action becomes simple multiplication. Both the assertions (i) and (ix) survive this isometric embedding, so their equivalence would have to be shown for $C^*$-algebras only which are considered as Hilbert $C^*$-modules over themselves.

The second trick is due to Akemann, see \cite[p. 278]{A1}, \cite[p. I]{A2}: Any $C^*$-algebra can be $*$-isometri\-cally embedded first in its bidual linear space, a von Neumann algebra, by classical $*$-representation theory of $C^*$-algebras. Further, the restriction of that $*$-isometric embedding to the atomic part of the bidual von Neumann algebra is again a $*$-isomorphism simply multiplying by the central carrier projection of the atomic part. This embedding also preserves both the assertions (i) and (ix). Since the center of the atomic von Neumann algebras is an atomic commutative von Neumann algebra one could consider the equivalence of these assertions (i) and (ix) for any atomic projection of it separately, recomposing the entire picture afterwards. This outlines, ay help one to prove or disprove the conjecture at least for the von Neumann type I factors.
\end{remark}

Recall that a linear mapping $T:\mathscr{E}\longrightarrow \mathscr{F}$, where $\mathscr{E}$ and $\mathscr{F}$ are inner product $\mathscr{A}$-modules, is said to be orthogonality preserving if $\langle x, y\rangle=0\, \Rightarrow \,\langle Tx, Ty\rangle=0$ for all $x, y\in \mathscr{E}$.

\begin{theorem}\label{th.212.5}
Let $\mathscr{E}$ and $\mathscr{F}$ be two inner product $\mathscr{A}$-modules and let $T:\mathscr{E}\longrightarrow \mathscr{F}$ be a nonzero linear mapping. If $|x|=|y|\, \Rightarrow \,|Tx|=|Ty|$ for all $x, y\in \mathscr{E}$, then $T$ is orthogonality preserving.
\end{theorem}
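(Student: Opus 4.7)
The proof will be a short, direct application of Lemma \ref{lm.211}. The key observation is that this lemma gives a characterization of orthogonality (condition (i)) purely in terms of the $C^{*}$-valued modulus $|\cdot|$ (condition (iv)), which is exactly the quantity the hypothesis of the theorem controls.

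The plan is as follows. Fix $x, y \in \mathscr{E}$ with $\langle x, y\rangle = 0$. By the equivalence (i)$\Leftrightarrow$(iv) of Lemma \ref{lm.211}, this is the same as
\[
|x + \lambda y| = |x - \lambda y| \qquad (\lambda \in \mathbb{C}).
\]
For each fixed $\lambda \in \mathbb{C}$, set $u = x + \lambda y$ and $v = x - \lambda y$, so that $|u| = |v|$. The hypothesis of the theorem applied to the pair $(u, v)$ then yields $|Tu| = |Tv|$, and by linearity of $T$ this reads
\[
|Tx + \lambda Ty| = |Tx - \lambda Ty| \qquad (\lambda \in \mathbb{C}).
\]
Applying the converse direction (iv)$\Rightarrow$(i) of Lemma \ref{lm.211} inside $\mathscr{F}$ to the pair $(Tx, Ty)$ then gives $\langle Tx, Ty\rangle = 0$, which is exactly what we had to prove.

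I do not expect any real obstacle here: no appeal to the $C^{*}$-algebra structure beyond what is already packaged in Lemma \ref{lm.211} is needed, and the nonzero assumption on $T$ is not used in the argument (it is presumably kept only to exclude the trivial case). The whole proof is essentially a sandwich: use (i)$\Leftrightarrow$(iv) of Lemma \ref{lm.211} on the $\mathscr{E}$ side to translate orthogonality into equality of moduli, push the equality through $T$ by hypothesis and linearity, and then use (iv)$\Rightarrow$(i) of Lemma \ref{lm.211} on the $\mathscr{F}$ side to translate the equality of moduli back into orthogonality.
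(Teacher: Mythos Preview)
Your argument is correct and is essentially identical to the paper's own proof: the paper also starts from $\langle x,y\rangle=0$, invokes the equivalence (i)$\Leftrightarrow$(iv) of Lemma~\ref{lm.211} to get $|x+\lambda y|=|x-\lambda y|$, applies the hypothesis and linearity to obtain $|Tx+\lambda Ty|=|Tx-\lambda Ty|$, and then recovers $\langle Tx,Ty\rangle=0$. The only cosmetic difference is that the paper expands the last step by hand (squaring and substituting $\lambda=1,i$) instead of citing (iv)$\Rightarrow$(i) of Lemma~\ref{lm.211} again, which amounts to the same thing.
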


\begin{proof}
Let $x, y\in \mathscr{E}$ such that $\langle x, y\rangle=0$. By the equivalence (i)$\Leftrightarrow$(iv) of Lemma \ref{lm.211} we have $|x+\lambda y|=|x-\lambda y|$ for all $\lambda\in \mathbb{C}$. We get $|Tx + \lambda Ty|=|Tx - \lambda Ty|$ for all $\lambda\in \mathbb{C}$ by assumption. So, $\lambda\langle Tx, Ty\rangle - \overline{\lambda}\langle Ty, Tx\rangle =0$. Putting $\lambda = 1, i$, we obtain $\langle Tx, Ty\rangle + \langle Ty, Tx\rangle =0$ and $\langle Tx, Ty\rangle - \langle Ty, Tx\rangle =0$. Therefore, $\langle Tx, Ty\rangle=0$ what forces $T$ to be orthogonality preserving.
\end{proof}

\begin{theorem}\label{th.212.5.1}
Let $\mathscr{E}$ and $\mathscr{F}$ be two inner product $\mathscr{A}$-modules and let $T:\mathscr{E}\longrightarrow \mathscr{F}$ be a nonzero $\mathscr{A}$-linear mapping. If $|x|\leq|y|\, \Rightarrow \,|Tx|\leq|Ty|$ for all $x, y\in \mathscr{E}$, then $T$ is orthogonality preserving.
\end{theorem}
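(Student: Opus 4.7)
My plan is to reduce Theorem \ref{th.212.5.1} to the preceding Theorem \ref{th.212.5}. Recall that Theorem \ref{th.212.5} already gives the orthogonality preserving conclusion from the seemingly weaker-looking hypothesis $|x|=|y|\Rightarrow|Tx|=|Ty|$ (using linearity only, no $\mathscr{A}$-linearity required). So it suffices to deduce that equality-hypothesis from the inequality-hypothesis of the current theorem.

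To carry this out, I would take arbitrary $x,y\in\mathscr{E}$ with $|x|=|y|$. Since $|x|$ and $|y|$ are positive elements of $\mathscr{A}$, equality gives both $|x|\leq|y|$ and $|y|\leq|x|$. Applying the standing hypothesis $|u|\leq|v|\Rightarrow|Tu|\leq|Tv|$ in both directions yields $|Tx|\leq|Ty|$ and $|Ty|\leq|Tx|$ in $\mathscr{A}$. Since the natural order on the positive cone of a $C^{*}$-algebra is a genuine partial order (the positive cone is proper, as recorded in the Preliminaries), antisymmetry forces $|Tx|=|Ty|$. Thus the hypothesis of Theorem \ref{th.212.5} is met, and that theorem immediately implies that $T$ is orthogonality preserving.

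There is essentially no obstacle here: the whole statement is a formal consequence of Theorem \ref{th.212.5} together with antisymmetry of the $C^{*}$-algebraic order. It is worth remarking in the write-up that the $\mathscr{A}$-linearity hypothesis is not actually used in this proof—plain linearity (inherited as a special case of $\mathscr{A}$-linearity) is enough, since everything is routed through Theorem \ref{th.212.5}, which itself only uses linearity. The $\mathscr{A}$-linearity assumption is presumably retained in the statement only for consistency with the $C^{*}$-module setting of the section and the converse direction pursued elsewhere in the paper.
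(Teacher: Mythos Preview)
Your proof is correct and takes a genuinely different route from the paper's. The paper argues directly: given $\langle x,y\rangle=0$ (and tacitly $Ty\neq0$), it sets $a=\dfrac{\langle Ty,Tx\rangle}{\|Ty\|^2}\in\mathscr{A}$, observes $|x|\leq|x-ya|$ by orthogonality, applies the hypothesis to obtain $|Tx|\leq|Tx-(Ty)a|$ (here $\mathscr{A}$-linearity is used to identify $T(ya)=(Ty)a$), and then expands the right-hand side to squeeze out $\langle Tx,Ty\rangle\langle Ty,Tx\rangle=0$. Your argument instead folds everything back into Theorem~\ref{th.212.5} via antisymmetry of the $C^*$-order, which is both shorter and---as you note---shows that the $\mathscr{A}$-linearity hypothesis is superfluous for this direction: plain linearity suffices. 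The paper's computation, on the other hand, is self-contained and does not appeal to Theorem~\ref{th.212.5}, but at the cost of genuinely using the module action.
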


\begin{proof}
Suppose, $\langle x, y\rangle=0$. Thus, $|x|\leq\Big[|x|^2 + \left|y\frac{\langle Ty, Tx\rangle}{\|Ty\|^2}\right|^2\Big]^\frac{1}{2} = \left|x - y\frac{\langle Ty, Tx\rangle}{\|Ty\|^2}\right|$. By assumption we get
\begin{align*}
|Tx| & \leq \left|Tx - Ty\frac{\langle Ty, Tx\rangle}{\|Ty\|^2}\right|
\\ & = \Big[|Tx|^2-\frac{2}{\|Ty\|^2}\langle Tx, Ty\rangle\langle Ty, Tx\rangle + \frac{1}{\|Ty\|^4}\langle Tx, Ty\rangle\langle Ty, Ty\rangle\langle Ty, Tx\rangle\Big]^\frac{1}{2}
\\&\leq\Big[|Tx|^2-\frac{2}{\|Ty\|^2}\langle Tx, Ty\rangle\langle Ty, Tx\rangle + \frac{1}{\|Ty\|^4}\langle Tx, Ty\rangle\|Ty\|^2\langle Ty, Tx\rangle\Big]^\frac{1}{2}
\\&=\Big[|Tx|^2-\frac{1}{\|Ty\|^2}\langle Tx, Ty\rangle\langle Ty, Tx\rangle\Big]^\frac{1}{2}
\\&\leq|Tx|.
\end{align*}
Therefore, $\langle Tx, Ty\rangle\langle Ty, Tx\rangle=0$ and $\langle Tx, Ty\rangle=0$. As a consequence, $T$ is orthogonality preserving.
\end{proof}

\begin{remark}
Consider the $C^*$-algebra $\mathbb{M}_2(\mathbb{C})$ as a Hilbert $C^*$-module over itself. If $T:\mathbb{M}_2(\mathbb{C})\longrightarrow \mathbb{M}_2(\mathbb{C})$ is a linear mapping, not necessarily an $\mathbb{M}_2(\mathbb{C})$-linear mapping, such that $|A|\leq|B|\, \Rightarrow \,|TA|\leq|TB|$ for all $A, B\in \mathbb{M}_2(\mathbb{C})$, then $T$ is orthogonality preserving. Indeed, let $A, B\in \mathbb{M}_2(\mathbb{C})$ such that $A^\ast B=0$. Thus
$$|A|\leq \Big[|A|^2+|\mu B|^2\Big]^\frac{1}{2} = |A + \mu B|,$$
for all $\mu\in \mathbb{C}$. Hence, we get $|TA|\leq|TA + \mu TB|$ for all $\mu\in \mathbb{C}$ by assumption. By \cite[Proposition 3.6]{A.R.1} we obtain $(TA)^\ast TB=0$, and $T$ is orthogonality preserving.
\end{remark}

In the following theorem we present a variant of Theorem \ref{th.23} for the context of $C^{*}$-modules.

\begin{theorem}\label{th.212}
Let $\mathscr{E}$ and $\mathscr{F}$ be two inner product $\mathscr{A}$-modules. For a nonzero $\mathscr{A}$-linear mapping $T:\mathscr{E}\longrightarrow \mathscr{F}$ the following statements are equivalent:
\begin{itemize}
\item[(i)] there exists $\gamma>0$ such that $\|Tx\|=\gamma\|x\|$ for all $x\in \mathscr{E}$, i.e., $T$ is a similarity;\\
\item[(ii)] $T$ is injective and $\frac{\langle Tx, Ty\rangle}{\|Tx\|\|Ty\|}=\frac{\langle x, y\rangle}{\|x\|\|y\|}$ for all $x, y\in \mathscr{E}\smallsetminus\{0\}$.
\end{itemize}
Furthermore, each one of the assertions above implies:
\begin{itemize}
\item[(iii)] $\langle x, y\rangle=0\, \Leftrightarrow \,\langle Tx, Ty\rangle=0$ for all $x, y\in \mathscr{E}$, i.e., $T$ is strongly orthogonality preserving;\\
\item[(iv)] $|x|=|y|\, \Leftrightarrow \,|Tx|=|Ty|$ for all $x, y\in \mathscr{E}$;\\
\item[(v)] $|x|\leq|y|\, \Leftrightarrow \,|Tx|\leq|Ty|$ for all $x, y\in \mathscr{E}$.
\end{itemize}
\end{theorem}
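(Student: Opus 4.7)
My plan is to prove the equivalence (i)$\Leftrightarrow$(ii) first, since both directions force the $\mathscr{A}$-valued identities $|Tx|=\gamma|x|$ and $\langle Tx,Ty\rangle=\gamma^{2}\langle x,y\rangle$, from which (iii), (iv) and (v) fall out immediately.

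For (i)$\Rightarrow$(ii), injectivity is clear from $\|Tx\|=\gamma\|x\|$ with $\gamma>0$. The crucial step is to upgrade this scalar identity to the $\mathscr{A}$-valued identity $|Tx|=\gamma|x|$. Here $\mathscr{A}$-linearity is essential: for every $a\in\mathscr{A}$ the relation $T(xa)=(Tx)a$ gives $|T(xa)|^{2}=a^{*}|Tx|^{2}a$ and $|xa|^{2}=a^{*}|x|^{2}a$, so the hypothesis yields $\||Tx|a\|=\gamma\||x|a\|$ for every $a\in\mathscr{A}$. The positivity fact recalled in Section~2 (two positive elements that act with equal norms against every multiplier must coincide) then forces $|Tx|=\gamma|x|$. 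The $\mathscr{A}$-valued polarization identity, applied to the pair $(Tx,Ty)$ and combined with the linearity of $T$, produces $\langle Tx,Ty\rangle=\gamma^{2}\langle x,y\rangle$; dividing by $\|Tx\|\|Ty\|=\gamma^{2}\|x\|\|y\|$ gives the identity in (ii).

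For (ii)$\Rightarrow$(i), set $\gamma_{x}:=\|Tx\|/\|x\|$ for $x\neq 0$. Taking $y=x$ in (ii) gives $|Tx|^{2}=\gamma_{x}^{2}|x|^{2}$, after which (ii) rewrites as $\langle Tx,Ty\rangle=\gamma_{x}\gamma_{y}\langle x,y\rangle$ for all nonzero $x,y$. The main obstacle is to show that $\gamma_{x}$ is a universal constant: unlike in the scalar case, the $\mathscr{A}$-valued inner product is not determined by a simple polarization of the norm, so the classical argument is unavailable. My plan is to fix $x$ and expand $\langle Tx,T(y+z)\rangle$ in two ways---once by $\mathbb{C}$-linearity, and once by applying the identity to the pair $(x,y+z)$ (assuming $y+z\neq 0$). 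Cancelling the nonzero factor $\gamma_{x}$ yields
\[\big\langle x,\,(\gamma_{y}-\gamma_{y+z})\,y+(\gamma_{z}-\gamma_{y+z})\,z\big\rangle=0\qquad\text{for all }x\in\mathscr{E}.\]
Substituting $x$ by the bracketed vector itself and invoking positive definiteness of $\langle\cdot,\cdot\rangle$ forces that vector to be zero. If $y$ and $z$ are $\mathbb{C}$-linearly independent this gives $\gamma_{y}=\gamma_{z}=\gamma_{y+z}$; if they are proportional, $\gamma_{y}=\gamma_{z}$ follows trivially from homogeneity of $T$ and the norm; the degenerate case $y+z=0$ is handled by $\gamma_{-y}=\gamma_{y}$. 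Varying $y,z$ over all pairs of nonzero vectors shows that $\gamma_{x}$ is a single positive constant $\gamma$, so (i) follows.

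Finally, under either equivalent condition the identities $|Tx|=\gamma|x|$ and $\langle Tx,Ty\rangle=\gamma^{2}\langle x,y\rangle$ established above trivially yield (iii), (iv) and (v).
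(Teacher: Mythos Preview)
Your proof is correct and follows essentially the same route as the paper's. For (i)$\Rightarrow$(ii) both arguments upgrade $\|Tx\|=\gamma\|x\|$ to $|Tx|=\gamma|x|$ via $\mathscr{A}$-linearity and the norm test for positive elements, then polarize; for (ii)$\Rightarrow$(i) both expand $\langle T(\cdot),T(y+z)\rangle$ in two ways and use linear independence to equate the ratios $\gamma_x=\|Tx\|/\|x\|$---the only cosmetic difference is that the paper fixes a reference unit vector $x_0$ and compares every $x$ to it, whereas you compare arbitrary pairs $y,z$ directly.
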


\begin{proof}
(i)$\Rightarrow$(ii) Clearly, $T$ is injective. For all $a\in\mathscr{A}$ and $x\in \mathscr{E}$
\begin{align}\label{id.2}
\Big\||Tx|a\Big\|&=\sqrt{\|(|Tx|a)^*(|Tx|a)\|}=\sqrt{\|a^*|Tx|^2a\|}\nonumber
\\&=\sqrt{\|a^*\langle Tx, Tx\rangle a\|}=\sqrt{\|\langle T(xa), T(xa)\rangle\|}\nonumber
\\&=\|T(xa)\|=\|\gamma(xa)\|\nonumber
\\&=\sqrt{\|\langle \gamma(xa), \gamma(xa)\rangle\|}=\sqrt{\|(\gamma|x|a)^*(\gamma|x|a)\|}\nonumber
\\&=\Big\|(\gamma|x|)a\Big\|.
\end{align}
Since $|Tx|$ and $\gamma|x|$ are positive, (\ref{id.2}) implies $|Tx|=\gamma|x|$. Now, for all $x, y\in \mathscr{E}\smallsetminus\{0\}$ we obtain
\begin{align*}
\frac{\langle Tx, Ty\rangle}{\|Tx\|\|Ty\|}&=\frac{\frac{1}{4}\sum_{k=0}^3i^k|T(x+i^ky)|^2}{(\gamma\|x\|)(\gamma\|y\|)}
\\&=\frac{\frac{1}{4}\sum_{k=0}^3i^k\gamma^2|x+i^ky|^2}{\gamma^2\|x\|\|y\|}
\\&=\frac{\langle x, y\rangle}{\|x\|\|y\|}.
\end{align*}
(ii)$\Rightarrow$(i) Fix $x_0\in \mathscr{E}$ with $\|x_0\|=1$ and set $\gamma=\|Tx_0\|$. Since $T$ is injective, so $\gamma>0$. For every $x\in \mathscr{E}$, if $x$ and $x_0$ are linearly dependent, then obviously $\|Tx\|=\gamma\|x\|$. Assume now, that $x$ and $x_0$ are linearly independent. By (ii) we get
\begin{align*}
\Big\langle z, (x+x_0)\frac{\|Tz\|\|T(x+x_0)\|}{\|z\|\|(x+x_0)\|}\Big\rangle&=\langle Tz, T(x+x_0)\rangle
\\&=\langle Tz, Tx\rangle+\langle Tz, Tx_0\rangle
\\&=\Big\langle z, x\frac{\|Tz\|\|Tx\|}{\|z\|\|x\|}\Big\rangle+\Big\langle z, x_0\frac{\|Tz\|\|Tx_0\|}{\|z\|\|x_0\|}\Big\rangle
\\&=\Big\langle z, x\frac{\|Tz\|\|Tx\|}{\|z\|\|x\|}+x_0\frac{\|Tz\|\|Tx_0\|}{\|z\|}\Big\rangle,
\end{align*}
for all $z\in \mathscr{E}\smallsetminus\{0\}$. Whence
$$(x+x_0)\frac{\|T(x+x_0)\|}{\|(x+x_0)\|}=x\frac{\|Tx\|}{\|x\|}+x_0\|Tx_0\|,$$
or equivalently,
$$x\Big(\frac{\|T(x+x_0)\|}{\|(x+x_0)\|}-\frac{\|Tx\|}{\|x\|}\Big)=x_0\Big(\|Tx_0\|-\frac{\|T(x+x_0)\|}{\|(x+x_0)\|}\Big).$$
The equality $\|Tx\|=\|Tx_0\|\|x\|=\gamma\|x\|$ follows.\\
(ii)$\Rightarrow$(iii) This implication is trivial by using (i).\\
(i)$\Rightarrow$(iv), (v)
Assume, (i) holds. As in the proof of the implication (i)$\Rightarrow$(ii), we get $|Tx|=\gamma|x|$ for some $\gamma>0$ and all $x\in \mathscr{E}$, what shows the implications.
\end{proof}

The following example shows that conditions (iii)-(v) are not equivalent to conditions (i)-(ii) in general.

\begin{example}\label{xe.212.1}
Let $\Omega $ be a locally compact
Hausdorff space. Let us take $\mathscr{E} = \mathscr{F} = C_0(\Omega)$, the $C^*$-algebra of all continuous complex-valued functions vanishing at infinity on $\Omega$. For a nonzero function $f_0 \in C_0(\Omega)$, suppose that $T:C_0(\Omega)\longrightarrow C_0(\Omega)$ is given by $T(g) = f_0g$. Obviously
$T$ is $C_0(\Omega)$-linear and satisfies conditions (iii)-(v) but need not satisfies conditions (i)-(ii). Indeed, if there exists $\gamma>0$ such that $\|T(g)\|=\gamma\|g\|$ for all $g\in C_0(\Omega)$, then $\frac{1}{\gamma^2}\overline{f_0}f_0h = h$ for all $h\in C_0(\Omega)$ and hence, $\frac{1}{\gamma^2}\overline{f_0}f_0$ is the identity in $C_0(\Omega)$, which is a contradiction (see \cite[Example 2.4]{I.T}).
\end{example}

\noindent
Recall that a linear mapping $T:\mathscr{E}\longrightarrow \mathscr{F}$, where $\mathscr{E}$ and $\mathscr{F}$ are inner product $\mathscr{A}$-modules, is called local if
$$xa = 0\, \Rightarrow \,(Tx)a = 0 \qquad (a\in\mathscr{A}, x\in \mathscr{E}).$$
Examples of local mappings include multiplication and differential operators. Note that every $\mathscr{A}$-linear mapping is local, but the converse is not true, in general (take linear differential operators into account). Moreover, every bounded local mapping between inner product modules is $\mathscr{A}$-linear \cite[Proposition A.1]{L.N.W.1}.

\begin{lemma}\label{le.212.9}\cite[Theorem 3.1]{I.T}
Let $\mathscr{E}$ and $\mathscr{F}$ be two inner product $\mathscr{A}$-modules such that $\mathbb{K}(\mathscr{H})\subseteq\mathscr{A}\subseteq\mathbb{B}(\mathscr{H})$. Suppose that $T:\mathscr{E}\longrightarrow \mathscr{F}$ is a nonzero orthogonality preserving $\mathscr{A}$-linear map. Then there exists a positive number $\gamma$ such that
\begin{align}\label{id.3.1}
\langle Tx, Ty\rangle = \gamma \langle x, y\rangle
\end{align}
for all $x, y\in \mathscr{E}$.
\end{lemma}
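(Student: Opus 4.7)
The plan is to exploit the presence of $\mathbb{K}(\mathscr{H})$ inside $\mathscr{A}$ in order to reduce the statement to Chmieli\'{n}ski's classical theorem on the Hilbert space level, and then to propagate the constant $\gamma$ back to all of $\mathscr{E}$ via the module action. Fix a rank-one projection $p \in \mathbb{K}(\mathscr{H}) \subseteq \mathscr{A}$. Since $p$ is minimal, $p\mathscr{A}p = \mathbb{C}p$, so $\mathscr{E}p$ carries a scalar-valued pre-Hilbert structure $(\cdot,\cdot)_p$ determined by $\langle xp, yp\rangle = (xp,yp)_p \cdot p$. The restriction $T|_{\mathscr{E}p} \colon \mathscr{E}p \to \mathscr{F}p$ is linear (by $\mathscr{A}$-linearity, $T(xp) = T(x)p \in \mathscr{F}p$) and preserves the scalar orthogonality $(\cdot,\cdot)_p$, because $(xp,yp)_p = 0$ is equivalent to $\langle xp, yp\rangle = 0$. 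Hence Chmieli\'{n}ski's theorem (invoked in the introduction) provides a constant $\gamma(p) \geq 0$ with
\begin{equation*}
(T(xp), T(yp))_p = \gamma(p) \,(xp,yp)_p, \qquad x,y \in \mathscr{E},
\end{equation*}
equivalently $\langle T(xp), T(yp)\rangle = \gamma(p)\,\langle xp, yp\rangle$.

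Next I would show that $\gamma(p)$ does not depend on $p$. Given two rank-one projections $p, q \in \mathbb{K}(\mathscr{H})$, choose a partial isometry $v \in \mathbb{K}(\mathscr{H})$ with $v^*v = p$ and $vv^* = q$. Then right-multiplication by $v^*$ sends $\mathscr{E}p$ onto $\mathscr{E}q$ and, since $\langle xv^*, yv^*\rangle = v\langle x,y\rangle v^*$, it is a scalar-isometric isomorphism between the pre-Hilbert spaces $(\mathscr{E}p, (\cdot,\cdot)_p)$ and $(\mathscr{E}q, (\cdot,\cdot)_q)$. By $\mathscr{A}$-linearity, $T(xv^*) = T(x)v^*$, so the analogous isometry on the range side intertwines $T|_{\mathscr{E}p}$ and $T|_{\mathscr{E}q}$, forcing $\gamma(p) = \gamma(q) =: \gamma$. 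To see that $\gamma > 0$, pick $x \in \mathscr{E}$ with $Tx \neq 0$; then the positive element $\langle Tx, Tx\rangle \in \mathscr{A} \subseteq \mathbb{B}(\mathscr{H})$ is nonzero, so some unit vector $\xi$ satisfies $\langle \langle Tx, Tx\rangle \xi, \xi\rangle > 0$, and the rank-one projection $p = \xi \otimes \xi$ yields $p\langle Tx, Tx\rangle p \neq 0$, hence $T(xp) \neq 0$ and $\gamma = \gamma(p) > 0$.

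It remains to extend the identity $\langle T(xp), T(yp)\rangle = \gamma\,\langle xp, yp\rangle$ from rank-one $p$'s to all $x, y \in \mathscr{E}$. By $\mathscr{A}$-linearity, this identity reads $p\,\langle Tx, Ty\rangle\,p = \gamma\, p\,\langle x, y\rangle\,p$ for every rank-one projection $p$. Linear combinations yield $a^*\langle Tx, Ty\rangle a = \gamma\, a^*\langle x, y\rangle a$ for every finite-rank $a \in \mathbb{K}(\mathscr{H})$, and norm-continuity of the operations on $\mathscr{A}$ extends this to all $a \in \mathbb{K}(\mathscr{H})$. Write $b := \langle Tx, Ty\rangle - \gamma\,\langle x, y\rangle \in \mathscr{A} \subseteq \mathbb{B}(\mathscr{H})$; the relation $a^*ba = 0$ for all $a \in \mathbb{K}(\mathscr{H})$ applied to rank-one $a = \xi \otimes \eta$ gives $\langle b\xi, \xi\rangle\,(\eta \otimes \eta) = 0$ for all $\xi, \eta \in \mathscr{H}$, so $\langle b\xi, \xi\rangle = 0$ for every $\xi$ and consequently $b = 0$. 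This delivers the desired identity $\langle Tx, Ty\rangle = \gamma\,\langle x, y\rangle$.

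The main obstacles I expect are psychological rather than technical: one must recognize that the rank-one projections of $\mathbb{K}(\mathscr{H})$ both slice $\mathscr{E}$ into classical pre-Hilbert spaces (where Chmieli\'{n}ski applies) and, through partial isometries inside $\mathbb{K}(\mathscr{H})$, glue the resulting constants together. The final passage from $p\,\langle Tx, Ty\rangle\,p = \gamma\, p\,\langle x, y\rangle\,p$ to the full identity requires the faithful action of $\mathbb{K}(\mathscr{H})$ on $\mathscr{A}$ by left and right multiplication, and this is precisely where the hypothesis $\mathbb{K}(\mathscr{H}) \subseteq \mathscr{A} \subseteq \mathbb{B}(\mathscr{H})$ is used in an essential way.
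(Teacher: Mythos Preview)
The paper does not prove this lemma; it is quoted verbatim from Theorem~3.1 of Ili\v{s}evi\'{c} and Turn\v{s}ek and then used as a black box in Theorem~\ref{th.213}. So there is no proof here to compare against, and your proposal is in effect supplying the argument that the paper outsources.

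Your strategy is the natural one (and, in outline, the one used in the cited source): slice $\mathscr{E}$ by a minimal projection $p\in\mathbb{K}(\mathscr{H})$ to obtain an ordinary pre-Hilbert space $\mathscr{E}p$, apply Chmieli\'{n}ski's scalar result to $T|_{\mathscr{E}p}$, use a partial isometry in $\mathbb{K}(\mathscr{H})$ to show the resulting constant is independent of $p$, and then recover the full identity from $p\langle Tx,Ty\rangle p=\gamma\,p\langle x,y\rangle p$. The argument is correct, with one cosmetic repair: the sentence ``Linear combinations yield $a^*\langle Tx,Ty\rangle a=\gamma\,a^*\langle x,y\rangle a$ for every finite-rank $a$'' is not justified as written, since $a\mapsto a^*ba$ is not linear in $a$. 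You do not need that step. From $pbp=0$ for every rank-one projection $p=\xi\otimes\xi$ you already get $\langle b\xi,\xi\rangle=0$ for every unit vector $\xi\in\mathscr{H}$, and complex polarization then gives $b=0$ immediately. With that shortcut the proof is clean. (A tacit point you might make explicit: the partial-isometry transfer shows that if $\mathscr{E}p=0$ for one rank-one $p$ then $\mathscr{E}q=0$ for all rank-one $q$, which would force $\mathscr{E}=0$; hence every $\mathscr{E}p$ is nonzero and $\gamma(p)$ is genuinely determined.)
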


Note that the assumption of $\mathscr{A}$-linearity, even in the case $\mathscr{A} = \mathbb{K}(\mathscr{H})$, is necessary in Lemma \ref {le.212.9} as can be seen from the following example.

\begin{example}
Let $\mathscr{H}$ be a Hilbert space such that $\dim\mathscr{H} = \infty $ and $\mathscr{H}_*= \mathscr{H}$ as an additive group, but define a new scalar multiplication on $\mathscr{H}_*$ by setting $\lambda \cdot x = \overline{\lambda}x$, and a new inner product by setting $\langle x| y\rangle_* = \langle y| x\rangle$. Then $\mathscr{H}_*$ equipped with the operations
$$\langle x, y\rangle := x\otimes y \qquad \mbox{and} \qquad x\cdot S : = S^\ast x \quad (x, y \in \mathscr{H}_* , S \in \mathbb{K}(\mathscr{H}))$$
is an inner product $\mathbb{K}(\mathscr{H})$-module. If $T:\mathscr{H}_*\longrightarrow \mathscr{H}_*$ is any unbounded linear map, then $T$ preserves orthogonality (namely, if $\langle x, y\rangle = x\otimes y = 0$, then $x = 0$ or $y = 0$. So $\langle Tx, Ty\rangle = Tx\otimes Ty = 0$), but $T$ obviously does not satisfy (\ref{id.3.1}).
\end{example}

The next result is a consequence of \cite[Corollary 3.2]{L.N.W.2} but we prove it for the sake of completeness.

\begin{theorem}\label{th.213}
Let $\mathscr{E}$ and $\mathscr{F}$ be two inner product $\mathscr{A}$-modules such that $\mathbb{K}(\mathscr{H})\subseteq\mathscr{A}\subseteq\mathbb{B}(\mathscr{H})$. Suppose that $T:\mathscr{E}\longrightarrow \mathscr{F}$ is a local and nonzero orthogonality preserving map. Then
\begin{itemize}
\item[(i)]$|x|=|y|\, \Leftrightarrow \,|Tx|=|Ty|$ for all $x, y\in \mathscr{E}$;
\item[(ii)]$|x|\leq|y|\, \Leftrightarrow \,|Tx|\leq|Ty|$ for all $x, y\in \mathscr{E}$.
\end{itemize}
\end{theorem}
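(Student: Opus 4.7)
The strategy is to reduce to Lemma \ref{le.212.9} by upgrading the ``local'' hypothesis on $T$ to full $\mathscr{A}$-linearity. Once this reduction is in place, Lemma \ref{le.212.9} yields a positive constant $\gamma$ with $\langle Tx, Ty\rangle = \gamma\langle x, y\rangle$ for all $x, y \in \mathscr{E}$. Specialising to $y=x$ gives $|Tx|^{2} = \gamma|x|^{2}$ in $\mathscr{A}$, and the uniqueness of the positive square root, applied to the two positive elements $|Tx|$ and $\sqrt{\gamma}\,|x|$, forces $|Tx| = \sqrt{\gamma}\,|x|$. Both equivalences (i) and (ii) then drop out immediately: multiplication by the positive scalar $\sqrt{\gamma}$ preserves equality as well as the partial order on positive elements of $\mathscr{A}$, so $|x|=|y| \Leftrightarrow \sqrt{\gamma}|x|=\sqrt{\gamma}|y| \Leftrightarrow |Tx|=|Ty|$, and analogously for the inequality in (ii).

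The entire difficulty therefore lives in the step that promotes locality to $\mathscr{A}$-linearity. My plan is first to establish that $T$ is norm bounded and then invoke the remark recalled just before the theorem, namely that every bounded local map between inner product $\mathscr{A}$-modules is automatically $\mathscr{A}$-linear (Proposition A.1 of \cite{L.N.W.1}). To establish boundedness, I would exploit the hypothesis $\mathbb{K}(\mathscr{H}) \subseteq \mathscr{A}$, which supplies an abundant collection of rank-one projections inside $\mathscr{A}$. For a fixed $x \in \mathscr{E}$ and such a projection $p$, the pair $xp$ and $x(e_{i}-p)$ (where $(e_{i})$ is an approximate unit for $\mathscr{A}$) is asymptotically orthogonal in a quantitatively controllable way; feeding these decompositions through the orthogonality-preserving $T$, using locality to relate $T(xp)$ and $(Tx)p$, and passing to the limit in $i$, should yield a uniform norm estimate $\|Tx\| \leq C\|x\|$.

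The principal obstacle is this boundedness step, because the orthogonality-preserving hypothesis must be combined with locality in a nontrivial way to extract a uniform Lipschitz constant; without continuity one cannot pass freely between $T(xa)$ and $(Tx)a$. An alternative route, closer in spirit to the \cite[Corollary 3.2]{L.N.W.2} the authors point to, would be to imitate the proof of Lemma \ref{le.212.9} directly in the local setting, working with expressions of the form $\langle T(xa)-(Tx)a,\,Tz\rangle$ and using locality to show they vanish for a sufficient family of $z$, so that $T(xa)=(Tx)a$ holds without any appeal to continuity. Either route closes the gap, after which the deduction of (i) and (ii) from $|Tx|=\sqrt{\gamma}\,|x|$ is routine.
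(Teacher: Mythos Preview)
Your endgame is exactly right: once $\langle Tx,Ty\rangle=\gamma\langle x,y\rangle$ is secured, $|Tx|=\sqrt{\gamma}\,|x|$ follows and both (i) and (ii) are immediate. The problem is entirely in the bridge from ``local'' to that identity, and there your plan has a real gap.

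Your primary route is to prove boundedness first and then invoke ``bounded local $\Rightarrow$ $\mathscr{A}$-linear''. But your boundedness sketch leans on orthogonality of $xp$ and $x(e_i-p)$, and this fails: $\langle xp,\,x(e_i-p)\rangle=p\langle x,x\rangle(e_i-p)$, which tends to $p\langle x,x\rangle-p\langle x,x\rangle p$, not to $0$, unless $|x|^2$ happens to commute with $p$. Since the orthogonality-preserving hypothesis gives you nothing for merely \emph{approximately} orthogonal inputs, this line does not produce a uniform bound. You also assume in passing that locality lets you ``relate $T(xp)$ and $(Tx)p$'', but that relation is precisely what has to be proved and is not available before linearity or continuity is known. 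Your alternative route---showing $T(xa)=(Tx)a$ by analysing $\langle T(xa)-(Tx)a,\,Tz\rangle$---is a gesture rather than an argument; nothing in the hypotheses produces enough test vectors $Tz$ to separate points of $\mathscr{F}$.

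The paper avoids both boundedness and full $\mathscr{A}$-linearity. Using \emph{only} locality (no orthogonality yet), it shows $T(xp)=(Tx)p$ for every compact projection $p$: from $xp\cdot(1-p)e_i(1-p)=0$ one gets $T(xp)(1-p)e_i(1-p)=0$, and letting $e_i$ run through an approximate unit of $\mathscr{A}$ forces $T(xp)(1-p)=0$; symmetrically $T(x-xp)p=0$, whence $(Tx)p=T(xp)$. This extends to finite-rank $a\in\mathbb{K}(\mathscr{H})$. One then defines an auxiliary $\mathbb{K}(\mathscr{H})$-linear map $\widetilde{T}$ on $\mathscr{E}\!\cdot\!\mathbb{K}(\mathscr{H})$ by $\widetilde{T}(yc)=T(y)c$, checks it is well defined, and observes that it inherits orthogonality preservation from $T$ because $\widetilde{T}(x)=\lim_j (Tx)f_j$ along an approximate unit $(f_j)$ of $\mathbb{K}(\mathscr{H})$. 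Lemma~\ref{le.212.9} then gives $\langle\widetilde{T}x,\widetilde{T}y\rangle=\gamma\langle x,y\rangle$ on the submodule, and sandwiching with $f_j(\,\cdot\,)f_k$ and passing to the limit upgrades this to $\langle Tx,Ty\rangle=\gamma\langle x,y\rangle$ on all of $\mathscr{E}$. In short, the paper reduces to $\mathbb{K}(\mathscr{H})$-linearity on a dense piece rather than attempting $\mathscr{A}$-linearity on the whole module, and this is exactly the manoeuvre your plan is missing.
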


\begin{proof}
Let $(e_i)_{i\in I}$ and $(f_j)_{j\in J}$ be approximate units for $\mathscr{A}$ and $\mathbb{K}(\mathscr{H})$, respectively. Suppose that $p\in \mathbb{K}(\mathscr{H})$ is a projection. For $x\in \mathscr{E}$ we have
$$xp(1-p)e_i(1-p) = 0 \quad \mbox{and} \quad x(1-p)pe_ip = 0.$$
Since $T$ is local, we obtain
$$T(xp)(1-p)e_i(1-p) = 0 \quad \mbox{and} \quad T(x(1-p))pe_ip = 0.$$
From $\lim\limits_{i}(1-p)e_i(1-p) = 1-p$ and $\lim\limits_{i}pe_ip = p$, we derive
\begin{align*}
(Tx)p &= \Big(T(x(1-p)) + T(xp)\Big)p
\\& = T(x(1-p))p - T(xp)(1-p) + T(xp)
\\& = \lim\limits_{i}T(x(1-p))pe_ip - \lim\limits_{i}T(xp)(1-p)e_i(1-p) + T(xp)
\\& = T(x(1-p))pe_ip - T(xp)(1-p)e_i(1-p) + T(xp)
\\& = T(xp).
\end{align*}
Thus, $T(xa) = (Tx)a$ for all finite rank operators $a\in \mathbb{K}(\mathscr{H})$.\\
Now, for any $x\in \mathscr{E}\cdot \mathbb{K}(\mathscr{H})$, there exist $c\in \mathbb{K}(\mathscr{H})$ and $y\in \mathscr{E}$ such that $x = yc$. Consider the linear mapping $\widetilde{T}:\mathscr{E}\cdot \mathbb{K}(\mathscr{H})\longrightarrow \mathscr{F}\cdot \mathbb{K}(\mathscr{H})$ defined by $\widetilde{T}(x) := T(y)c$. \\
Notice that $\widetilde{T}(x)$ is independent of the decomposition $x = yc$. Therefore,
$$\widetilde{T}(xa) = \widetilde{T}(yca) = T(y)ca = \widetilde{T}(x)a$$
for all $x\in \mathscr{E}$ and all $a\in \mathbb{K}(\mathscr{H})$. Since $(f_j)_{j\in J}$ is an approximate unit for $\mathbb{K}(\mathscr{H})$ it follows from
$\|T(x)f_j - \widetilde{T}(x)\| = \|T(ycf_j) - T(y)c\| = \|T(y)cf_j - T(y)c\|$ that $\lim\limits_{i} T(x)f_j = \widetilde{T}(x)$ for all $x\in \mathscr{E}\cdot \mathbb{K}(\mathscr{H})$ and all $j\in J$. Therefore, if $x_1, x_2\in \mathscr{E}\cdot \mathbb{K}(\mathscr{H})$ with $\langle x_1, x_2\rangle = 0$, then $\langle Tx_1, Tx_2\rangle = 0$ since $T$ is orthogonality preserving, which implies $\langle (Tx_1)f_j, (Tx_2)f_k\rangle = 0$ for all $j, k\in J$. As a consequence, $\langle \widetilde{T}x_1, \widetilde{T}x_2\rangle = 0$, whence $\widetilde{T}$ has to be an orthogonality preserving $\mathbb{K}(\mathscr{H})$- linear map. \\
By Lemma \ref{le.212.9} there exists a positive number $\gamma$ such that $\langle \widetilde{T}x, \widetilde{T}y\rangle = \gamma \langle x, y\rangle$ for all $x, y\in \mathscr{E}\cdot \mathbb{K}(\mathscr{H})$. Thus
$$f_j\langle Tx, Ty\rangle f_k = \langle \widetilde{T}(xf_j), \widetilde{T}(yf_k)\rangle = \gamma \langle xf_j, yf_k\rangle = f_j\gamma \langle x, y\rangle f_k$$
for all $x, y\in \mathscr{E}$ and all $j, k \in J$.
Hence, $\langle Tx, Ty\rangle = \gamma \langle x, y\rangle$ for all $x, y\in \mathscr{E}$, and $|Tx| = \sqrt{\gamma}|x|$ for all $x\in \mathscr{E}$. This actually yields (i) and (ii).
\end{proof}

Combining Theorems \ref{th.212.5.1} and \ref{th.213} we get the next result.

\begin{corollary}
Let $\mathscr{E}$ and $\mathscr{F}$ be two inner product $\mathscr{A}$-modules and $\mathbb{K}(\mathscr{H})\subseteq\mathscr{A}\subseteq\mathbb{B}(\mathscr{H})$. Suppose that $T:\mathscr{E}\longrightarrow \mathscr{F}$ is a nonzero $\mathscr{A}$-linear mapping between inner product $\mathscr{A}$-modules. Then $T$ is orthogonality preserving if and only if $$|x|\leq|y|\, \Rightarrow \,|Tx|\leq|Ty|$$ for all $x, y\in \mathscr{E}$.
\end{corollary}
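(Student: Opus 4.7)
The statement is a biconditional whose two directions are already essentially packaged as the two theorems that precede it, so my plan is to reduce the corollary to an invocation of each theorem in turn, being careful only about the hypothesis matching.

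For the implication $|x|\leq|y|\Rightarrow|Tx|\leq|Ty| \,\Longrightarrow\, T$ orthogonality preserving, I would simply cite Theorem \ref{th.212.5.1}. The hypotheses of that theorem are a nonzero $\mathscr{A}$-linear mapping between inner product $\mathscr{A}$-modules satisfying the implication $|x|\leq|y|\Rightarrow|Tx|\leq|Ty|$ (actually, reading it again, Theorem \ref{th.212.5.1} gets the conclusion even under the weaker hypothesis $|x|=|y|\Rightarrow|Tx|=|Ty|$ combined with $\mathscr{A}$-linearity; but in either reading, our hypothesis is at least as strong, so the conclusion follows without further work). No restriction on $\mathscr{A}$ is required for this direction.

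For the converse, I would use Theorem \ref{th.213}. The crucial observation — which the paper has already recorded in the paragraph following the definition of local mappings — is that every $\mathscr{A}$-linear mapping is automatically local. Hence under our hypothesis, $T$ is a nonzero orthogonality preserving local map, and Theorem \ref{th.213}(ii) is directly applicable and yields the equivalence $|x|\leq|y|\Leftrightarrow|Tx|\leq|Ty|$, in particular the desired one-sided implication. This direction uses the hypothesis $\mathbb{K}(\mathscr{H})\subseteq\mathscr{A}\subseteq\mathbb{B}(\mathscr{H})$ in an essential way (via Lemma \ref{le.212.9} inside the proof of Theorem \ref{th.213}), which is why this assumption is placed on the corollary.

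There is really no substantive obstacle here; the only point that requires a sentence of justification is the step ``$\mathscr{A}$-linear $\Rightarrow$ local'', which is immediate from the definition since $xa=0$ gives $(Tx)a = T(xa) = T(0) = 0$. Thus the entire proof reduces to a one-line check of hypotheses for each direction and a citation of Theorem \ref{th.212.5.1} and Theorem \ref{th.213} respectively.
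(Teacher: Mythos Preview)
Your proposal is correct and matches the paper's own approach exactly: the paper simply states that the corollary follows by ``combining Theorems \ref{th.212.5.1} and \ref{th.213}'', and you have supplied the one missing link (that $\mathscr{A}$-linear maps are local). One small slip: your parenthetical conflates Theorem~\ref{th.212.5.1} with Theorem~\ref{th.212.5} (the latter is the one with the hypothesis $|x|=|y|\Rightarrow|Tx|=|Ty|$), but as you note this does not affect the argument.
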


\end{document}